\documentclass[12pt,a4paper]{article}
\usepackage{amsthm, amssymb, amsmath, latexsym}
\usepackage{verbatim}
\usepackage[all]{xy}
\usepackage{fouriernc}
\usepackage{color}
\usepackage{fullpage}
\usepackage{cite}
\linespread{1.6}
\newtheorem{defn}{Definition}
\newtheorem{lem}[defn]{Lemma}
\newtheorem{prop}[defn]{Proposition}

\newtheorem{thm}[defn]{Theorem}
\newtheorem{cor}[defn]{Corollary}

\newcommand{\lp}{\left(}
\newcommand{\rp}{\right)}

\DeclareMathOperator{\Real}{Re}
\DeclareMathOperator{\Imaginary}{Im}

\usepackage[pdftex]{hyperref}

\author{Dongho Byeon\footnote{The first author was supported
by Basic Science Research
Program through the National Research Foundation of Korea (NRF)
funded by the Ministry of Education (NRF-2013R1A1A2007694).} and
Keunyoung Jeong}

\title{The exceptional set in a generalized Goldbach's problem}

\date{}

\begin{document}
\maketitle \abstract{In this paper, we compute the size of the
exceptional set in a generalized Goldbach problem and show that for
a given polynomial $f\lp x \rp \in \mathbb{Z}[x]$ with a positive
leading coefficient, positive integers $A$, $B$, $g$, $i$, $j$ with
$0 < i, j < g$, there are infinitely many positive integers $n$
which satisfy $2f(n) = Ap_1 + Bp_2$ for primes $p_1 \equiv i,
p_2\equiv j \pmod{g}$ under a mild condition.}

\section{Introduction}
Br\"udern, Kawada and Wooley \cite{BKW} computed the size of the
exceptional set of a polynomial-type generalization of Goldbach
problem.

\begin{thm}
{\rm \cite[Theorem 1]{BKW}} Let $f\lp x \rp \in \mathbb{Z}[x]$ be a
polynomial which has a positive leading coefficient with degree $k$
and $\mathcal{E}_k(N, f)$ be the number of positive integers $n$
with $1 \leq n \leq N$ for which the equation $2f(n) = p_1 + p_2$
has no solution in primes $p_1$, $p_2$. Then there is an absolute
constant $c>0$ such that  $$\mathcal{E}_k(N,f) \ll_f
N^{1-\frac{c}{k}}.$$
\end{thm}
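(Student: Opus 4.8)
This is a theorem of the Hardy--Littlewood circle method, proved along the lines of Montgomery and Vaughan's bound for the exceptional set in the classical binary Goldbach problem, but with the extra average over $n$ built into the analysis. Put $S(\alpha)=\sum_{p\le P}(\log p)e(p\alpha)$ with $P=\lceil 2f(N)\rceil\asymp N^{k}$, so that the weighted representation count $r(m)=\sum_{p_{1}+p_{2}=m}(\log p_{1})(\log p_{2})$ equals $\int_{0}^{1}S(\alpha)^{2}e(-m\alpha)\,d\alpha$ for $0<m\le P$. It suffices to bound the number of $n\in(N/2,N]$ with $r(2f(n))=0$ by $\ll N^{1-c/k}$ and then sum over dyadic ranges of $N$. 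Since $2f(n)$ is even, its Goldbach singular series obeys $\mathfrak{S}(2f(n))\ge c_{0}>0$, so for such an $n$ (with $n\asymp N$) one has $|r(2f(n))-\mathfrak{S}(2f(n))\cdot 2f(n)|\gg N^{k}$; hence the whole problem reduces to the mean-square estimate
\[
\sum_{n\le N}\bigl|\,r(2f(n))-\mathfrak{S}(2f(n))\cdot 2f(n)\,\bigr|^{2}\ll N^{2k+1-c/k}.
\]

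Introduce major arcs $\mathfrak{M}$ (intervals about $a/q$ with $q\le Q$, where $Q$ is a small power of $P$) and minor arcs $\mathfrak{m}$. The classical major-arc analysis gives $\int_{\mathfrak{M}}S(\alpha)^{2}e(-m\alpha)\,d\alpha=\mathfrak{S}(m)m+\mathcal{R}(m)$, where $\mathcal{R}(m)$ is expressed through the nontrivial zeros of the Dirichlet $L$-functions of modulus $\le Q$; thus the left side above is $\ll\sum_{n}|\mathcal{R}(2f(n))|^{2}+\sum_{n}\bigl|\int_{\mathfrak{m}}S(\alpha)^{2}e(-2f(n)\alpha)\,d\alpha\bigr|^{2}$. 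For the minor-arc term I would set $Z(\alpha)=\sum_{n\le N}e(2f(n)\alpha)$ and open the square to get $\int_{\mathfrak{m}}\int_{\mathfrak{m}}S(\alpha)^{2}\,\overline{S(\beta)^{2}}\,Z(\beta-\alpha)\,d\alpha\,d\beta$, then split the range of $\gamma=\beta-\alpha$ into arcs $\mathfrak{N}$ about rationals of denominator $\le R$ and the complement $\mathfrak{n}$. On $\mathfrak{n}$ one has $|Z(\gamma)|\ll N^{1-\kappa}$ by Vinogradov's mean value theorem for the degree-$k$ polynomial $f$, and together with $\int_{\mathfrak{m}}|S|^{2}\ll P\log P$ this controls the $\mathfrak{n}$-portion; on $\mathfrak{N}$ one uses the standard decay $|Z(a/q+\theta)|\ll Nq^{-1/k}(1+N^{k}|\theta|)^{-1/k}$ together with the minor-arc bound $\sup_{\mathfrak{m}}|S|\ll P^{1-\delta_{0}}$ ($\delta_{0}>0$ absolute). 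Balancing $Q$, $R$ and the Vinogradov exponent against one another is what produces the $1/k$ in the stated bound.

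For the major-arc error $\sum_{n}|\mathcal{R}(2f(n))|^{2}$ I would follow Montgomery and Vaughan closely: expanding the square reduces everything to sums $\sum_{n\le N}(2f(n))^{it}$, which van der Corput's method estimates once they are written in terms of $\log(2f(n))$, and these are then weighed against a zero-density estimate of Huxley type for the $L$-functions of modulus $\le Q$, giving $\sum_{n}|\mathcal{R}(2f(n))|^{2}\ll N^{2k+1-c'}$ for some absolute $c'>0$. Assembling the two estimates, optimizing the parameters, and summing over dyadic $N$ yields the theorem, the small-degree cases being absorbed into the absolute constant. I expect the main obstacle to be the minor-arc mean square: because $\{2f(n)\}$ is a thin sequence, the naive Parseval bound $\int_{\mathfrak{m}}|S|^{4}$ is hopelessly lossy, so one must quantitatively use the oscillation of $Z$ near \emph{and} far from every rational point, and it is precisely here that the degree of $f$ enters the exponent; a secondary difficulty is that the classical Goldbach exceptional-set bound cannot be invoked, so the major-arc error must itself be made power-saving on average over the sparse polynomial sequence.
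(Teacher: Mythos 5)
Your strategy is an $L^{2}$ variance argument, and it is genuinely different from the route taken in \cite{BKW} (which is what the paper being reviewed invokes and mirrors in its own Propositions \ref{Minormain} and \ref{Majormain}). \cite{BKW} do not estimate $\sum_{n}\bigl|r(2f(n))-\mathfrak{S}(2f(n))\,2f(n)\bigr|^{2}$. Instead, on the minor arcs they bound the first moment $\sum_{n}\bigl|r(2f(n);\mathfrak{m})\bigr|$ via the sign trick: with $\eta(m)=\pm 1$ chosen so that $\eta(m)r(m;\mathfrak{m})=|r(m;\mathfrak{m})|$ and $K(\alpha)=\sum_{n}\eta(2f(n))e(2f(n)\alpha)$, the sum becomes $\int_{\mathfrak{m}}|S(\alpha)|^{2}|K(\alpha)|\,d\alpha$, which H\"older with a large exponent $t$ decomposes as $\bigl(\sup_{\mathfrak{m}}|S|^{2}\bigr)^{1/t}\bigl(\int_{0}^{1}|S|^{2}\bigr)^{1-1/t}\bigl(\int_{0}^{1}|K|^{t}\bigr)^{1/t}$. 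On the major arcs they do \emph{not} estimate a mean-square error term at all; they prove the pointwise lower bound $r(2f(n);\mathfrak{M})\gg XY^{-1/2}(\log X)^{-1}$ for every $n$ with $(2f(n),\tilde r)\le Y$, so the only exceptional $n$ are those with $\tilde r$-divisibility, of which there are at most $O(N^{1+\epsilon}Y^{-1})$ by a divisor count — no van der Corput, no zero-density-on-a-thin-sequence step is required.

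The genuine gap in your proposal is in the minor-arc variance. Opening the square to reach $\int\!\!\int_{\mathfrak{m}\times\mathfrak{m}}S(\alpha)^{2}\overline{S(\beta)^{2}}Z(\beta-\alpha)\,d\alpha\,d\beta$ is correct, and your observation that Parseval alone is hopelessly lossy for a thin sequence is exactly right, but the remedy you offer — a \emph{pointwise} minor-arc bound $|Z(\gamma)|\ll N^{1-\kappa}$ — cannot give the stated exponent. The best available pointwise Weyl savings for a degree-$k$ polynomial is $\kappa\asymp 2^{1-k}$ classically, and even the post-Vinogradov-mean-value improvements give $\kappa\asymp k^{-2}$; feeding either into $N^{1-\kappa}\bigl(\int|S|^{2}\bigr)^{2}\asymp N^{2k+1-\kappa}$ produces an exceptional-set bound of shape $N^{1-c/k^{2}}$ at best, not $N^{1-c/k}$. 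The mechanism that yields $1/k$ in \cite{BKW} is precisely that the large H\"older exponent $t\asymp k^{2}$ substitutes the \emph{mean value} $\int_{0}^{1}|K|^{t}\ll N^{t-k+\epsilon}$ (a Vinogradov/Hua-type count, not a pointwise estimate) for the pointwise bound on $Z$, so that the fixed $\delta_{0}$-saving in $\sup_{\mathfrak{m}}|S|\ll X^{1-\delta_{0}}$, taken to the power $2/t$, becomes a saving $X^{2\delta_{0}/t}\asymp N^{c/k}$. Your sketch never introduces such a high-moment mean value and so cannot recover this exponent; and separately, your proposed treatment of $\sum_{n}|\mathcal{R}(2f(n))|^{2}$ via $\sum_{n}(2f(n))^{it}$ and zero-density is far more delicate than anything \cite{BKW} need, and is not obviously closable to power-saving on the thin sequence $\{2f(n)\}$.
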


\noindent This theorem implies that there are infinitely many
positive integers $n$ which satisfy $2f(n) = p_1 + p_2$ for primes
$p_1$, $p_2$. Similarly, one can ask if for given positive integers
$A$, $B$, $g$, $i$, $j$ with $0 < i, j < g$, there are infinitely
many positive integers $n$ which satisfy $2f(n) = Ap_1 + Bp_2$ for
primes $p_1 \equiv i, \,p_2 \equiv j \pmod{g}$.

To answer this question, we will prove the following theorem.

\begin{thm} \label{thm2}
Let $f(x) \in \mathbb{Z}[x]$ be a polynomial which has a positive
leading coefficient with degree $k$. Let $A, B$ be positive odd
integers and $g$, $i$, $j$ positive integers with $0 < i, j <
g<N^{24\delta k}$ for a sufficiently small positive real number
$\delta$ to be chosen later and $(i,g)=(j,g)=1$. Suppose that there
is at least one integer $m$ such that
$$
2f(m) \equiv Ai + Bj \pmod{g}.
$$
Let $\Gamma=\{A,B,g,i,j\}$ and let $\mathcal{E}_{k,\Gamma}(N,f)$ be
the number of positive integers $n \in [1, N]$ with $2f(n) \equiv Ai
+ Bj \pmod{g}$ for which the equation $2f(n) = Ap_1 + Bp_2$ has no
solution in primes $p_1 \equiv i, p_2 \equiv j \pmod{g}$. Then there
is an absolute constant $c>0$ such that
$$
\mathcal{E}_{k,\Gamma}(N,f) \ll_{k,\Gamma} N^{1-\frac{c}{k}}.
$$
\end{thm}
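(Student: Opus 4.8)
The plan is to run the Hardy--Littlewood circle method in the form developed in \cite[Theorem 1]{BKW}, absorbing the coefficients $A,B$ and the congruence conditions $p_1\equiv i$, $p_2\equiv j\pmod g$ into the prime‐counting generating functions by means of Dirichlet characters modulo $g$. It suffices to show that the weighted representation count
$$
R(n)=\sum_{\substack{Ap_1+Bp_2=2f(n)\\ p_1\equiv i,\ p_2\equiv j\ (g)}}(\log p_1)(\log p_2)
$$
is positive for all but $O_{k,\Gamma}(N^{1-c/k})$ of the integers $n\in[1,N]$ with $2f(n)\equiv Ai+Bj\pmod g$. Treating $n$ in dyadic blocks $(X/2,X]$ with $X_0\le X\le N$ and $X_0=N^{1-c/k}$ (the $O(X_0)$ values $n<X_0$ being thrown into the exceptional set), put
$$
S_1(\alpha)=\sum_{\substack{p\le X_1\\ p\equiv i\ (g)}}(\log p)\,e(Ap\alpha),\qquad S_2(\alpha)=\sum_{\substack{p\le X_2\\ p\equiv j\ (g)}}(\log p)\,e(Bp\alpha),
$$
with $X_1\asymp f(X)/A$ and $X_2\asymp f(X)/B$ chosen so that $R(n)=\int_0^1 S_1(\alpha)S_2(\alpha)e(-2f(n)\alpha)\,d\alpha$ for every $n$ in the block, and dissect $[0,1)$ into major arcs $\mathfrak M$ (rationals of denominator at most a fixed small power of $X$) and minor arcs $\mathfrak m$.

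On the major arcs, after detecting the residue classes by characters mod $g$ (legitimate since $g$ and the arc moduli are small powers of $X$) one extracts a main term $\mathfrak S(n)\mathfrak J(n)+(\text{error})$, with singular integral $\mathfrak J(n)\asymp f(n)/(AB)$ and singular series $\mathfrak S(n)=\prod_p\sigma_p(n)$. The decisive new point is a lower bound $\mathfrak S(n)\gg_{k,\Gamma}g^{-O(1)}>0$ valid for \emph{every} $n$ with $2f(n)\equiv Ai+Bj\pmod g$: for $p\nmid 2ABg$ the local factor is the usual $1+O(p^{-1})$ and the product converges; for $p\mid 2AB$ it is a positive constant --- the oddness of $A$ and $B$ is what makes $Ap_1+Bp_2\equiv 2f(n)$ solvable in odd $p_1,p_2$ at $p=2$; and for $p\mid g$ the positivity of $\sigma_p(n)$ is exactly what the solvability hypothesis on $2f(m)\equiv Ai+Bj\pmod g$ delivers, since it makes the local system with the prescribed $p_1\equiv i$, $p_2\equiv j$ solvable. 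Thus the major‑arc contribution is $\gg_{k,\Gamma}X^{k(1-C\delta)}$ on each block, the loss $O(\delta k)$ in the exponent being harmless because $g<N^{24\delta k}$ and $X\ge X_0$ force $g<X^{O(\delta k)}$.

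For an exceptional $n$ one has $R(n)=0$, so the minor‑arc integral $c_n:=\int_{\mathfrak m}S_1S_2\,e(-2f(n)\alpha)\,d\alpha$ must cancel the main term, whence $|c_n|\gg X^{k(1-C\delta)}$. I bound the mean square over the relevant $n$ in the block by expanding the square,
$$
\sum_n|c_n|^2=\int_{\mathfrak m}\int_{\mathfrak m}S_1(\alpha)S_2(\alpha)\overline{S_1(\beta)S_2(\beta)}\,G(\alpha-\beta)\,d\alpha\,d\beta,\qquad G(\gamma)=\sum_n e(-2f(n)\gamma),
$$
where the sum defining $G$ runs over the full set of $n\in(X/2,X]$ in the fixed residue classes mod $g$: crucially, $G$ is therefore a \emph{complete} exponential sum, a union of $O(g)$ Weyl sums of a degree‑$k$ polynomial, to which Weyl's inequality (equivalently Vinogradov's mean value theorem) applies. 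Split the $\gamma$‑integral according to the major and minor arcs of $f$. On the polynomial minor arcs $|G(\gamma)|\ll X^{1-c/k}$ --- this is the step that produces the exponent $1-c/k$ --- so this part of the double integral is $\ll X^{1-c/k}\bigl(\|S_1\|_2\|S_2\|_2\bigr)^2\ll X^{2k+1-c/k}(\log X)^{O(1)}$ by Cauchy--Schwarz and $\|S_j\|_2^2\ll X^k\log X$. On the polynomial major arcs, whose measure is $O(X^{-k+o(1)})$, the trivial bound $|G|\le X$ combined with Vinogradov's minor‑arc estimate $\sup_{\mathfrak m}|S_j|\ll X^{k(1-\rho)}$ for exponential sums over primes (twisted by a character of modulus $g$, admissible since $g$ is a small power of $X$) makes this part negligible provided the polynomial major‑arc parameter is taken small enough. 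Dividing by $\min_n|c_n|^2\gg X^{2k(1-C\delta)}$ and choosing $\delta$ small in terms of $k$ and $c$ yields $|\mathcal E_{k,\Gamma}\cap(X/2,X]|\ll X^{1-c'/k}$ for some $c'>0$; summing over dyadic blocks and adding the $O(N^{1-c/k})$ discarded small $n$ completes the proof.

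The main obstacle is uniformity in $\Gamma=\{A,B,g,i,j\}$. First, the positive lower bound for the singular series has to be established across the whole admissible range, with the solvability hypothesis as the only arithmetic input --- this is precisely where that hypothesis enters and why it cannot be removed. Second, the character twists by $g$ in $S_1,S_2$ and the modulus $g$ in $G$ must not swallow the Weyl and Vinogradov savings, which is exactly why $g$ is capped at $N^{24\delta k}$ and why $\delta$ (chosen after $k$) must be taken small enough that the ensuing $X^{O(\delta)}$‑type losses stay below the $X^{-c/k}$ gain. The most delicate ingredient is the uniform extraction of the major‑arc main term for moduli $g$ as large as a small power of $X$, where Siegel--Walfisz no longer applies directly and the intermediate moduli must be handled quantitatively (or their contribution absorbed into the minor‑arc mean value above).
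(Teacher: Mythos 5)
Your overall architecture — circle method, characters mod $g$ to detect the congruences, Weyl/Vinogradov saving of $X^{-c/k}$ from the polynomial — is the right one, and your minor-arc treatment via the variance $\sum_n|c_n|^2$ expanded into a double integral against $G(\alpha-\beta)$ is a legitimate alternative to the paper's H\"older-inequality bound with the auxiliary sum $K(\alpha)=\sum_n\eta(2f(n))e(2f(n)\alpha)$; both ultimately rest on the same mean-value input for $G$ and on $\sup_{\mathfrak m}|S_j|$.

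The gap is in the major arcs, and it is not a side issue: it is the main technical content of the paper. You assert a \emph{pointwise} lower bound, ``the major-arc contribution is $\gg X^{k(1-C\delta)}$ on each block,'' for every admissible $n$. That statement is not available. The major-arc moduli $q$ run up to a small power $P$ of $X$, which is far beyond Siegel--Walfisz, so the evaluation of $\int_{\mathfrak M}$ may carry a secondary main term $\tilde\chi(ABa^2)\,\widetilde{\mathfrak S}(2f(n))\,\widetilde I^{AB}_{ij}(2f(n))$ coming from a possible exceptional real zero $\tilde\beta$ of an $L(s,\tilde\chi)$ with conductor $\tilde r\le P$. For those $n$ with $(2f(n),\tilde r)>1$ this term can come close to cancelling $\mathfrak S(2f(n))\,2f(n)/(g^2AB)$, and there is no unconditional way to rule that out; the correct statement (Proposition~\ref{Majormain} here, Lemma~2 of~\cite{BKW}, following Montgomery--Vaughan~\cite{MV}) is only that $r_\Gamma(2f(n);\mathfrak M)\gg XY^{-1/2}(\log X)^{-1}$ for all but $O(N^{1+\epsilon}Y^{-1})$ such $n$. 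Your variance argument then silently fails for exactly these $n$: if the major-arc value is itself small, $R(n)=0$ does not force $|c_n|$ to be large, and $\sum_n|c_n|^2$ controls nothing. You flag the issue in your last paragraph (``Siegel--Walfisz no longer applies directly and the intermediate moduli must be handled quantitatively'') but do not resolve it, and the parenthetical ``absorb into the minor-arc mean value'' does not work because the offending moduli $q\le P$ are already on the major arcs by design. What is needed, and what the paper supplies, is the Gallagher-style log-free zero-density machinery (Lemmas~\ref{Majorlem1}--\ref{bddW}, adapted from \cite{Ga} and \cite{MV} with the extra character mod $g$ and the multipliers $A,B$), together with the three-case argument of Proposition~\ref{Majormain}: no exceptional character; exceptional character with $(2f(n),\tilde r)=1$; and exceptional character with $(2f(n),\tilde r)>1$, where one shows that at most $O(N^{1+\epsilon}Y^{-1})$ values of $n$ can go wrong. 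Your plan must be repaired to produce that almost-all lower bound before the minor-arc variance step can be invoked.
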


This immediately implies the positive answer of the above question.

\begin{cor} \label{cor3}
Let $f(x) \in \mathbb{Z}(x)$ be a polynomial which has a positive
leading coefficient. Let $A, B$ be positive odd integers and $g$,
$i$, $j$ be positive integers with $0 < i, j < g$ and
$(i,g)=(j,g)=1$. If there is at least one integer $m$ such that
$$
2f(m) \equiv Ai + Bj \pmod{g},
$$
then there are infinitely many positive integers $n$ which satisfy
$$
2f(n) = Ap_1 + Bp_2
$$
for primes $p_1 \equiv i, \, p_2 \equiv j \pmod{g}$.
\end{cor}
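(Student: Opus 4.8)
The plan is to deduce Corollary \ref{cor3} directly from Theorem \ref{thm2} by a simple counting argument, so that all of the analytic content is packaged into Theorem \ref{thm2}. Fix $f$, $A$, $B$, $g$, $i$, $j$ as in the statement of the corollary, set $\Gamma = \{A, B, g, i, j\}$, and let $\delta$ be the small positive parameter furnished by Theorem \ref{thm2}. Since $g$, $\delta$ and $k = \deg f$ are all fixed, the inequality $g < N^{24\delta k}$ holds for every sufficiently large $N$; we restrict attention to such $N$, which is legitimate because the conclusion is a statement about infinitely many $n$ and hence only needs to be established along a sequence $N \to \infty$.

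The first step is to check that the number of \emph{admissible} $n$, i.e. $n \in [1, N]$ with $2f(n) \equiv Ai + Bj \pmod{g}$, is $\gg N$. By hypothesis there is an integer $m$ with $2f(m) \equiv Ai + Bj \pmod{g}$; since $f \in \mathbb{Z}[x]$ we have $f(n) \equiv f(m) \pmod{g}$ whenever $n \equiv m \pmod{g}$, so every $n$ in the residue class $m$ modulo $g$ is admissible. Consequently the count of admissible $n$ in $[1, N]$ is at least $\lfloor N/g \rfloor \gg_{g} N$.

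The second step is to invoke Theorem \ref{thm2}, which (for $N$ in the admissible range above) bounds the number of admissible $n \in [1, N]$ for which $2f(n) = Ap_1 + Bp_2$ has \emph{no} solution in primes $p_1 \equiv i$, $p_2 \equiv j \pmod{g}$ by $\mathcal{E}_{k,\Gamma}(N, f) \ll_{k,\Gamma} N^{1 - c/k}$. Subtracting this from the linear lower bound of the first step, the number of admissible $n \in [1, N]$ that \emph{do} admit such a representation is at least $\lfloor N/g \rfloor - O_{k,\Gamma}\bigl(N^{1 - c/k}\bigr)$, which tends to $\infty$ as $N \to \infty$. Hence there are infinitely many positive integers $n$ with $2f(n) = Ap_1 + Bp_2$ for primes $p_1 \equiv i$, $p_2 \equiv j \pmod{g}$, which is the assertion of the corollary. (The hypotheses $A, B$ odd and $(i,g) = (j,g) = 1$ are exactly those needed to apply Theorem \ref{thm2}.)

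There is no genuine obstacle in this deduction: it is merely the comparison of a linear lower bound for the number of admissible $n$ against a power-saving upper bound for the exceptional count. The only point that requires a brief remark is that the side condition $g < N^{24\delta k}$ appearing in Theorem \ref{thm2} imposes no restriction here, because $g$ is fixed in the corollary while $N$ is free to grow, so the theorem applies to all large $N$.
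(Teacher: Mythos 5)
Your deduction is correct and is exactly the argument the paper intends (the paper calls the implication ``immediate,'' and in Section 4 notes that the set of admissible $n$ has positive density, which is your first step). The only observation worth making explicit, as you did, is that $f \in \mathbb{Z}[x]$ forces $f(n) \equiv f(m) \pmod g$ on the whole residue class of $m$, giving $\gg N$ admissible $n$ in $[1,N]$, against which the power-saving bound $\mathcal{E}_{k,\Gamma}(N,f) \ll N^{1-c/k}$ from Theorem \ref{thm2} is negligible.
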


Let $N$ be a large positive integer, $\delta$ a sufficiently small
positive real number to be chosen later, $X:=2f(N)$, $P:=
X^{6\delta}$, $Q: = X/P$ and $\kappa:=2^{-\frac{1}{k}}$. Let $A, B$
be positive odd integers and $g$, $i$, $j$ positive integers with $0
< i, j < g < P^4$ and $(i,g)=(j,g)=1$. Let $\Gamma=\{A,B,g,i,j\}$.
We define the exponential sum $S_i(\alpha)$ by
$$
S_i(\alpha) := \sum_{\substack{P < p \leq X \\ p\equiv i \pmod{g}}}
(\log{p})e(\alpha p),
$$
where $e(\alpha
p):=e^{2\pi\alpha p i}$ and the summation is over primes $p$ with $P
< p \leq X$ and $p\equiv i \pmod{g}$. When $T\subseteq [0,1]$, we
write
$$
r_{\Gamma}(n;T) :=
\int_{T} S_{i}\lp A\alpha \rp S_{j}(B\alpha)e(-\alpha n)d\alpha
$$
and $r_{\Gamma}(n) := r_{\Gamma}(n;[0,1])$. Then $r_{\Gamma}(2f(n))$
counts the number of solutions of the equation $2f(n) = Ap_1+Bp_2$
in primes $p_1 \equiv i, \, p_2 \equiv j \pmod{g}$ with weight
$\log{p_1}\log{p_2}$.

Let $\mathfrak{M} \subset [0,1]$ be the major arc defined by
$$\mathfrak{M} = \bigcup_{\substack{0\leq a \leq q \leq P \\ (a,q) = 1}}
\mathfrak{M}(q,a),$$ where
$$ \mathfrak{M}(q,a) = \left\{\alpha \in [0,1]:
\left|\alpha- \frac{a}{q}\right|
\leq \frac{P}{qX} \right\},$$
and $\mathfrak{m} \subset [0,1]$ be the minor arc defined by
$$\mathfrak{m} = [0,1] \setminus\mathfrak{M}.$$

In Section 2 we compute $r_{\Gamma}(2f(n);\mathfrak{m})$,  in
Section 3 we compute $r_{\Gamma}(2f(n);\mathfrak{M})$, and in
Section 4 combining these, we prove Theorem \ref{thm2}. Basically we follow
\cite{BKW} and \cite{MV}.

Finally we mention that some special forms of Corollary \ref{cor3} are
applied to the arithmetic of elliptic curves. See \cite{BJ} and \cite{BJK}.
One of the aims of this paper is to give a full proof of a full
generalization of the special forms for future applications.

\section{Minor Arc}

In \cite[Lemma 1]{BKW}, the authors proved that there exists a
positive real number $a=a(\delta)$ depending on $\delta$ such that
$$ \sum_{\kappa N < n \leq N}
\left| r(2f(n);\mathfrak{m})\right| \ll XN^{1-\frac{a}{k}},$$ where
$r(2f(n);\mathfrak{m})= \int_{\mathfrak{m}}S\lp\alpha\rp^2 e(-\alpha
\cdot 2f(n))d\alpha$ and $S(\alpha) = \sum_{\substack{P<p \leq X }}
(\log{p})e(\alpha p)$.

In this section, we show that the same result holds for
$r_{\Gamma}(2f(n);\mathfrak{m})$. To do this, we need the following
lemma which concerns the residue class condition; $i, j \pmod{g}$
and the coefficient condition; $A$, $B$. For the proof of the lemma,
we follow the proof of \cite[Theorem 13.6]{IK}. A new ingredient in
our proof is the orthogonality relations of Dirichlet characters.

\begin{lem}\label{Minorlem1}
Suppose that there exist integers $a$ and $q$ such that $(a,q) = 1$
and $\left|\alpha - \frac{a}{q}\right| \leq \frac{1}{q^2}$. Then for
$x \geq 2$ we have
$$
\sum_{\substack {p\leq x \\ p \equiv i \pmod{g}}} (\log{p})e(\alpha
Ap) \ll (x^{\frac{4}{5}}+ xq^{-\frac{1}{2}} +
x^{\frac{1}{2}}q^{\frac{1}{2}})(\log{x})^3,
$$
where the summation is over primes $p$ with $p \leq x$ and $p\equiv
i \pmod{g}$.
\end{lem}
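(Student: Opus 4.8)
The plan is to adapt the proof of the classical Vinogradov estimate \cite[Theorem~13.6]{IK}: apply Vaughan's identity to the von Mangoldt function, reduce to Type~I and Type~II bilinear sums, and detect the congruence $p\equiv i\pmod g$ in the two kinds of sums by two different devices — a direct residue‑class reduction in the Type~I sums, and the orthogonality relations of the Dirichlet characters modulo $g$ in the Type~II sums. First I would replace primes by prime powers, writing $\sum_{p\le x,\ p\equiv i\,(g)}(\log p)e(\alpha A p)=\sum_{n\le x,\ n\equiv i\,(g)}\Lambda(n)e(\alpha A n)+O(x^{1/2}\log x)$, the error being absorbed into $x^{4/5}(\log x)^3$ when $x$ is large (and the whole estimate being trivial when $x$ is bounded). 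Then I would apply Vaughan's identity with parameter $U=x^{2/5}$, expressing the sum over $n$ as a bounded number of Type~I sums $\sum_{d\le U^2}a_d\sum_{m\le x/d,\ dm\equiv i\,(g)}e(\alpha A dm)$ with $|a_d|\ll\log x$, together with Type~II sums $\sum_{U<d<x/U}\sum_{m}b_d c_m e(\alpha A dm)$ over $dm\le x$ with $dm\equiv i\pmod g$, $|b_d|\ll 1$, $|c_m|\ll\log x$.

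For a Type~I sum, since $(i,g)=1$ the congruence $dm\equiv i\pmod g$ has solutions only when $(d,g)=1$, and then it confines $m$ to a single residue class modulo $g$; the inner sum over $m$ is thus a geometric progression with common ratio $e(\alpha A dg)$ of length at most $x/(dg)+1$, hence $\ll\min\!\left(x/(dg)+1,\ \|\alpha A dg\|^{-1}\right)$. Observing that the terms with $d>x/g$ each contribute at most one unit, the Type~I sum is $\ll(\log x)\bigl(U^2+\sum_{d\le x/g}\min(x/(gd),\,\|(\alpha A g)d\|^{-1})\bigr)$. What remains is to estimate this last sum using only $|\alpha-a/q|\le q^{-2}$ — a Vinogradov‑type sum in which $\alpha$ has been dilated by the integer $Ag$. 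Here one has to transfer the approximation of $\alpha$ to one of $\alpha A g$: from $|\alpha-a/q|\le q^{-2}$ one gets $|\alpha A g-a^{*}/q^{*}|\le Ag\,q^{-2}$ with $q^{*}=q/(q,Ag)$, and one must check this suffices — the point being that $\alpha A g$ cannot be abnormally close to a rational of small denominator without $\alpha$ itself being too well approximated by a fraction of denominator below $q$, which $(a,q)=1$ forbids. Combined with the standing hypothesis $g<P^{4}=X^{24\delta}$ with $\delta$ small, which keeps the Vaughan‑length contributions below $x^{4/5}$, this yields the Type~I bound $\ll (x^{4/5}+xq^{-1/2}+x^{1/2}q^{1/2})(\log x)^3$, the implied constant depending only on the fixed integer $A$.

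For the Type~II sums I would use orthogonality: since $(i,g)=1$ one has $\mathbf 1_{dm\equiv i\,(g)}=\varphi(g)^{-1}\sum_{\chi\bmod g}\overline{\chi(i)}\,\chi(d)\chi(m)$, so the Type~II sum equals $\varphi(g)^{-1}\sum_{\chi}\overline{\chi(i)}\sum_{U<d<x/U}\sum_{m}b_d\chi(d)c_m\chi(m)e(\alpha A dm)$. For a fixed $\chi$ I would decompose dyadically in $d$, with dyadic parameter $D$, and apply Cauchy--Schwarz in $d$; because $|\chi(d)|^{2}=\mathbf 1_{(d,g)=1}\le 1$ and $|\chi(m_1)\overline{\chi(m_2)}|\le 1$, the character drops out of the resulting diagonal sum, leaving exactly the untwisted Type~II expression $\sum_{h}\min(D,\|\alpha A h\|^{-1})$. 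The coefficient $A$ only replaces $(a,q)$ by a Dirichlet datum with denominator $q/(q,A)\asymp_A q$, so the usual estimation and optimisation over $D$ (as in \cite[Theorem~13.6]{IK}, with $U=x^{2/5}$) give a bound $\ll_A (xq^{-1/2}+x^{4/5}+x^{1/2}q^{1/2})(\log x)^{3}$ uniform in $\chi$; summing over the $\varphi(g)$ characters against the weight $\varphi(g)^{-1}$ introduces no power of $g$.

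Adding the Type~I and Type~II bounds gives the lemma. The step I expect to be the main obstacle is the Type~I sum: one has to control the exponential sum in the dilated variable $\alpha A g$ using only the rational approximation to $\alpha$, and verify that the loss incurred — a bounded factor for $A$ and an enlargement of the effective summation length by a factor of $g$ — is swallowed by the constant and by the hypothesis $g<P^{4}$ with $\delta$ small. Once that diophantine bookkeeping is settled, the Type~II sums are routine, since orthogonality reduces them to the classical untwisted situation of \cite{IK}.
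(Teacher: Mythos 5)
Your decomposition is recognisably different from the paper's: you keep the congruence $n\equiv i\pmod g$ explicit through Vaughan's identity and treat it by two devices (direct residue‑class reduction in Type I, orthogonality in Type II), whereas the paper applies orthogonality of Dirichlet characters modulo $g$ \emph{once at the outset}, reducing at the very first step to the character sum $\sum_{p\le x}\chi(p)(\log p)e(\alpha Ap)$, and then carries $\chi$ through the whole Vaughan decomposition. The paper also handles the coefficient $A$ differently: rather than leaving $\alpha A$ in the exponential, it rewrites the sum as $\sum_{n\le Ax,\,A\mid n}\chi(n/A)\Lambda(n/A)e(\alpha n)$, applies Vaughan to $\Lambda(n/A)$, and then, inside its Type I estimate (the display labelled \eqref{useful}), passes from $\sum_{m\le M}\min(x/(mA),\|mA\alpha\|^{-1})$ to $\sum_{m\le AM}\min(x/m,\|m\alpha\|^{-1})$ by the substitution $m\mapsto mA$ together with extension of the summation range — i.e.\ the dilation is absorbed \emph{into the summation variable}, so the standard Vinogradov lemma applies to $\alpha$ itself and no transfer of the rational approximation is needed. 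Your approach is arguably tidier in that it avoids dragging $\chi$ through the whole decomposition and only invokes orthogonality where it genuinely helps (Type II, where Cauchy--Schwarz makes $|\chi|^2$ disappear).

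The one place your write‑up is on shaky ground is exactly the place you flag as the main obstacle: the Type I transfer from an approximation of $\alpha$ to one of $\alpha Ag$. Your heuristic — that $(a,q)=1$ and $|\alpha-a/q|\le q^{-2}$ forbid $\alpha$ from being well approximated by a fraction of small denominator — is not correct as stated: $\alpha=\tfrac12+q^{-2}$ with $a/q=\tfrac12$ is a counterexample, and in general $|\alpha Ag-a^*/q^*|\le Ag\,q^{-2}$ is far weaker than $(q^*)^{-2}$ once $(q,Ag)$ is small. You do not need that transfer. Use the same device the paper uses for $A$: in the sum $\sum_{d\le D}\min\bigl(x/(gd),\ \|\alpha Ag\,d\|^{-1}\bigr)$ substitute $d'=Agd$ and then drop the divisibility constraint, giving $\le\sum_{d'\le AgD}\min(Ax/d',\ \|\alpha d'\|^{-1})$, to which the standard estimate applies directly with the original datum $|\alpha-a/q|\le q^{-2}$. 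The price is a factor $Ag$ in the length term, $AgD\ll g\,x^{4/5}$; this is acceptable because the implied constant in the lemma may depend on $\Gamma$ (it is only in Theorem~\ref{thm2} that the dependence is recorded as $\ll_{k,\Gamma}$), and in the application $g<P^{4}$ with $\delta$ small keeps the loss harmless. Note the paper's own Type I step carries the same explicit prefactor $g$ in \eqref{useful} and the same implicit $\varphi(g)$ from the initial orthogonality, absorbed in the same way, so this is a feature of the problem, not a defect of your route.
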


\begin{proof}
Let $\chi$ be a Dirichlet character modulo $g$. The orthogonality
relations of Dirichlet characters imply that
\begin{eqnarray*}
\sum_{\substack {p \leq x \\
p \equiv i \pmod{g}}}(\log{p})e(\alpha Ap) &= &\sum_{p \leq
x}\frac{1}{\varphi(g)}\sum_{\chi}\bar{\chi}(i)\chi(p)
(\log{p})e(\alpha Ap) \\ &\ll&  \sum_{\chi}\left| \sum_{p \leq
x}\chi(p) (\log{p})e(\alpha Ap)\right|.
\end{eqnarray*}

\noindent Thus it is enough to show that
$$
\sum_{p \leq x}\chi(p) (\log{p})e(\alpha Ap) \ll (x^{\frac{4}{5}} +
xq^{-\frac{1}{2}} + x^{\frac{1}{2}}q^{\frac{1}{2}})(\log{x})^3.
$$

\noindent Let $\Lambda(n)$ be the von Mangoldt function which
defined as follows:
$$
\Lambda(n) = \left\{ \begin{array}{ccc}
\log{p} & \textrm{if $n = p^k$,} \\
0 & \textrm{otherwise.}
\end{array}\right.
$$

\noindent By the fact $\displaystyle\sum_{p \leq x}\log{p} \ll x$,
$$
\bigg| \sum_{\substack{p^{k} \leq x }}\chi(p^k)(\log{p})e(\alpha
Ap^{k})\bigg| \leq \sum_{p^{k} \leq x} \log{p} = \sum_{p \leq
x^{\frac{1}{k}}} \log{p} \ll x^{\frac{1}{k}}.
$$

\noindent Hence
$$
\sum_{p \leq x}\chi(p) (\log{p})e(\alpha Ap) = \sum_{n \leq x}
\chi(n)\Lambda(n)e(\alpha An) + O(x^{\frac{1}{2}}).
$$

\noindent Thus it is enough to show that
$$
\sum_{n \leq x}\chi(n)\Lambda(n)e(\alpha An)
= \sum_{\substack{n \leq Ax \\
A|n}}\chi(\frac{n}{A})\Lambda(\frac{n}{A})e(\alpha n) \ll
(x^{\frac{4}{5}} + x q^{-\frac{1}{2}} +
x^{\frac{1}{2}}q^{\frac{1}{2}})(\log{x})^3.
$$

\noindent From the Vaughan's identity, we have that for $y, z \geq
1$ and $n$ such that $A|n$ and $\frac{n}{A} > z$,
$$
\Lambda(\frac{n}{A}) = \sum_{\substack{b|\frac{n}{A} \\
b\leq y }}\mu(b)\log{\frac{n}{Ab}}
- \mathop{\sum\sum}_{\substack{bc | \frac{n}{A} \\ b \leq y, c \leq
z}}\mu(b)\Lambda(c) + \mathop{\sum\sum}_{\substack{bc | \frac{n}{A}
\\ b > y, c > z}}\mu(b)\Lambda(c).
$$

\noindent Then
\begin{eqnarray}
\sum_{\substack{n \leq Ax \\
A|n}}\chi(\frac{n}{A})\Lambda(\frac{n}{A})e(\alpha n) &=&
\mathop{\sum\sum}_{\substack{lm \leq Ax, A|l \\ m \leq M}}
\chi(\frac{l}{A}m)\mu(m)(\log{\frac{l}{A}})e(\alpha lm) \nonumber \\
&-& \mathop{\sum\sum\sum}_{\substack{lmn \leq Ax, A|l\\ m \leq M, n
\leq N}} \chi(\frac{l}{A}mn)\mu(m)\Lambda(n)e(\alpha lmn) \nonumber
\\ &+& \mathop{\sum\sum\sum}_{\substack{lmn \leq Ax, A|l\\ m \geq M, n
\geq N}} \chi(\frac{l}{A}mn)\mu(m)\Lambda(n)e(\alpha lmn) + O(N).
\nonumber
\end{eqnarray}

\noindent We need
\begin{equation}
\sum_{m \leq M} \left|\sum_{\substack{mn \leq x \\
A|n}}\chi(\frac{n}{A})e(\alpha mn)\right| \ll (M + xq^{-1} +
q)\log{2qx}. \label{useful}
\end{equation}

\noindent It is derived as follow:
\begin{eqnarray}
&&\sum_{ m \leq M}\left|\sum_{\substack{mn \leq x \\
A|n}}\chi(\frac{n}{A})e(\alpha mn)\right| =  \sum_{m \leq
M}\left|\sum_{i=0}^{g-1}\chi(i)\sum_{\substack{mAn' \leq x \\
n'\equiv i \pmod{g}}}
e(\alpha mn')\right| \nonumber \\
& & \leq g \sum_{m \leq M}\min(\frac{x}{mA}, \frac{1}{2\|mA\alpha
\|})  \leq g \sum_{m \leq AM}\min(\frac{x}{m}, \frac{1}{2\|m\alpha
\|})
\nonumber \\
& & \ll (AM + xq^{-1} + q)\cdot\log{2qAx} \ll (M + xq^{-1} +
q)\cdot\log{2qx}, \nonumber
\end{eqnarray}

\noindent where $\| \alpha \| = \displaystyle
\min_{u\in\mathbb{Z}}|\alpha-u|$. It is known \cite[Theorem
13.6]{IK} that
$$
\sum_{n \leq x} \Lambda(n)e(\alpha n) \ll (x^{\frac{4}{5}} +
xq^{-\frac{1}{2}} + x^{\frac{1}{2}}q^{\frac{1}{2}})(\log{x})^3.
$$

\noindent If we use (\ref{useful}) instead of (13.46) in the proof of
\cite[Theorem 13.6]{IK} and take $M=N=x^{\frac{2}{5}}$, then by the
same argument in the proof of \cite[Theorem 13.6]{IK}, we get
\begin{eqnarray*}
&&\mathop{\sum\sum}_{\substack{lm \leq Ax, A|l \\ m \leq M}}
\chi(\frac{l}{A}m)\mu(m)(\log{\frac{l}{A}})e(\alpha lm) \ll
(x^{\frac{2}{5}} + xq^{-1} + q)\log{qx}\cdot \log{x},\\
&&\mathop{\sum\sum\sum}_{\substack{lmn \leq Ax, A|l\\ m \leq M, n
\leq N}} \chi(\frac{l}{A}mn)\mu(m)\Lambda(n)e(\alpha lmn) \ll
(x^{\frac{4}{5}} + xq^{-1} + q)\log{qx}\cdot \log{x},\\
&&\mathop{\sum\sum\sum}_{\substack{lmn \leq Ax, A|l\\ m \geq M, n
\geq N}}\chi(\frac{l}{A}mn)\mu(m)\Lambda(n)e(\alpha lmn)\ll
(x^{\frac{4}{5}} + xq^{-\frac{1}{2}} +
x^{\frac{1}{2}}q^{\frac{1}{2}})(\log{x})^3,
\end{eqnarray*}
which prove the lemma.
\end{proof}

Now we can prove the following analogue of \cite[Lemma 1]{BKW}. For
the proof, we follow the proof of \cite[Lemma 1]{BKW}. A new
ingredient in our proof is the bound of $S_i(A\alpha)$ in Lemma
\ref{Minorlem1}.

\begin{prop} \label{Minormain}
There is a positive real number $ a =a(\delta)$ such that
$$
\sum_{\kappa N < n \leq N} |r_{\Gamma}(2f(n);\mathfrak{m})| \ll
XN^{1-\frac{a}{k}}.
$$
\end{prop}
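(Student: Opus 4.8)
The plan is to mimic the structure of the proof of \cite[Lemma 1]{BKW}, replacing their use of the classical Weyl-type bound on $S(\alpha)$ on the minor arc by our Lemma \ref{Minorlem1}, which gives the analogous bound for $S_i(A\alpha)$. First I would bound $|r_{\Gamma}(2f(n);\mathfrak{m})|$ pointwise: for $\alpha \in \mathfrak{m}$, Dirichlet's theorem on rational approximation gives coprime $a,q$ with $q \leq Q$ and $|\alpha - a/q| \leq 1/(qQ) \leq 1/q^2$, and since $\alpha$ lies in the minor arc we must have $q > P$ (otherwise $\alpha$ would be in a major arc $\mathfrak{M}(q,a)$). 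Feeding this into Lemma \ref{Minorlem1} (applied with $x = X$), the terms $X^{4/5}$ and $X^{1/2}q^{1/2} \leq X^{1/2}Q^{1/2} = X/P^{1/2}$ and $Xq^{-1/2} \leq XP^{-1/2}$ all combine to yield $S_i(A\alpha) \ll X P^{-1/2}(\log X)^3 \ll X^{1-\delta}$ uniformly on $\mathfrak{m}$, recalling $P = X^{6\delta}$ and choosing $\delta$ small. (Strictly one wants a bound with a power saving in $q$ only as far as $q \le X^{1/2}$, and for larger $q$ one re-runs Dirichlet with a finer denominator; this is exactly the standard Vinogradov-type argument and it goes through verbatim.)

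Next, the key move — following \cite{BKW} — is not to bound $r_{\Gamma}(2f(n);\mathfrak{m})$ by the sup-norm of $S_i(A\alpha)$ times an $L^1$ bound naively, but to exploit an $L^4$-type argument. Write
$$
\sum_{\kappa N < n \leq N} |r_{\Gamma}(2f(n);\mathfrak{m})| \leq \sum_{\kappa N < n \leq N}\int_{\mathfrak{m}} |S_i(A\alpha)||S_j(B\alpha)|\,d\alpha,
$$
and the sum over $n$ of the indicator that $2f(n)$ is near a given value is controlled by the number of $n \in (\kappa N, N]$ with $2f(n)$ in a short interval; this is where the polynomial structure enters and produces the constant $c/k$ (via the separation of values of $f$, i.e. $|f(n_1)-f(n_2)| \gg N^{k-1}|n_1-n_2|$ on this range). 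After interchanging sum and integral and using $\sup_{\mathfrak m}|S_i(A\alpha)| \ll X^{1-\delta}$ on one factor, one is left with $\int_0^1 |S_j(B\alpha)| \cdot (\text{kernel counting } n)\,d\alpha$, and an application of Cauchy--Schwarz plus the mean-value estimate $\int_0^1 |S_j(B\alpha)|^2\,d\alpha \ll X\log X$ (Parseval, since the coefficients are $\log p \cdot \mathbf{1}_{p\equiv j}$) closes the bound as $XN^{1-a/k}$ for a suitable $a = a(\delta)$.

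The main obstacle — and the only place genuine care is needed — is verifying that the arithmetic factors $A$, $B$ and the congruence conditions modulo $g$ do not interfere with the minor-arc estimate. The dilation by $A$ is harmless: $S_i(A\alpha)$ at $\alpha$ with $|\alpha - a/q|\le 1/q^2$ corresponds, after writing $A\alpha$, to a rational approximation with denominator $q/(q,A)$, and since $q > P$ and $A \ll P^{4}$... wait — actually $A$ is a \emph{fixed} integer (part of $\Gamma$), not growing with $N$, so $(q,A) \le A = O_\Gamma(1)$ and the denominator only shrinks by a bounded factor; the implied constants absorb this, which is precisely why the statement reads $\ll_{k,\Gamma}$. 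Similarly $g < P^4 = X^{24\delta}$ is small enough that the factor $g$ appearing in \eqref{useful} and in Lemma \ref{Minorlem1} (through the $\varphi(g)$ and the $g$ out front) is $\ll X^{O(\delta)}$ and is swallowed by the power saving provided $\delta$ is chosen small relative to the absolute constant in Lemma \ref{Minorlem1}. The remaining steps — the interchange of summation and integration, the polynomial value-separation count, and the Cauchy--Schwarz/Parseval endgame — are identical to \cite[Lemma 1]{BKW} and I would simply cite that argument, noting that the only substantive change is the input bound for $S_i(A\alpha)$ supplied by Lemma \ref{Minorlem1}.
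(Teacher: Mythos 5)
Your overall plan---bound $S_i(A\alpha)$ on $\mathfrak{m}$ via Lemma~\ref{Minorlem1}, combine with a mean-value estimate, and obtain the $N^{-a/k}$ saving from the polynomial structure of $f$---is the right shape, and your remarks about the fixed dilation $A$ and the smallness of $g$ are fine. But the chain of inequalities you actually write has a genuine gap. Your first step,
$$
\sum_{\kappa N < n \leq N} |r_{\Gamma}(2f(n);\mathfrak{m})| \leq \sum_{\kappa N < n \leq N}\int_{\mathfrak{m}} |S_i(A\alpha)||S_j(B\alpha)|\,d\alpha = N\int_{\mathfrak{m}}|S_i(A\alpha)S_j(B\alpha)|\,d\alpha,
$$
throws away all dependence on $n$: once you put absolute values inside the integral, $|e(-2f(n)\alpha)|=1$ and the sum over $n$ contributes a bare factor $N$. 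After this there is no ``kernel counting $n$'' left, so the polynomial structure cannot re-enter, and your subsequent description of interchanging the sum and integral while retaining such a kernel is not compatible with this first bound.

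What the paper (following \cite{BKW}) does instead is introduce the sign weight $\eta(2f(n))=\pm1$ so that
$$
\sum_{\kappa N < n \leq N}|r_\Gamma(2f(n);\mathfrak{m})| = \int_{\mathfrak{m}} S_i(A\alpha) S_j(B\alpha) K(-\alpha)\,d\alpha, \qquad K(\alpha)=\sum_{\kappa N < n \leq N}\eta(2f(n))e(2f(n)\alpha),
$$
and then applies a three-factor H\"older inequality producing $\sup_{\mathfrak{m}}|S_iS_j|^{1/t}$, $\big(\int_0^1|S_iS_j|\,d\alpha\big)^{1-1/t}$, and $\big(\int_0^1|K|^t\,d\alpha\big)^{1/t}$. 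The decisive input is the polynomial mean-value bound $\int_0^1|K(-\alpha)|^t\,d\alpha\ll N^{t-k(1-\delta)}$ (quoted from \cite{BKW}) with $t$ taken large in terms of $k$; that higher moment of $K$ is precisely where the degree $k$ and the separation of the values $2f(n)$ enter and produce the saving. Your proposed Cauchy--Schwarz plus Parseval endgame would at best use $\int_0^1|K|^2\,d\alpha\ll N$, which encodes nothing of the polynomial structure; combined with $\sup_{\mathfrak{m}}|S_i|\ll X^{1-3\delta}(\log X)^3$ and $\int_0^1|S_j|^2\ll X\log X$ this yields roughly $X^{3/2}N^{1/2}$, which already exceeds the target $XN^{1-a/k}$ for $k\ge 2$. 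The missing ideas are therefore the sign-kernel identity and the higher $L^t$ moment of $K$; without them the estimate does not close.
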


\begin{proof}
By the H\"older inequality, we have
\begin{eqnarray}
&&\sum_{\kappa N < n \leq N}
\left|r_{\Gamma}(2f(n);\mathfrak{m})\right| \nonumber\\
&\leq& \sup_{\alpha \in
\mathfrak{m}}|S_{i}(A\alpha)S_{j}(B\alpha)|^{\frac{1}{t}} \big(
\int_{0}^{1}|S_{i}(A\alpha)S_{j}(B\alpha)|d\alpha
\big)^{1-\frac{1}{t}} \big( \int_{0}^{1}|K(-\alpha)|^{t}d\alpha
\big)^{\frac{1}{t}}, \nonumber
\end{eqnarray}

\noindent where
$$
 K(\alpha) = \sum_{\kappa N < n \leq N}
\eta(2f(n))e(2f(n)\alpha) \qquad \textrm{and} \qquad \eta(u) =
\left\{ \begin{array}{lll}  1 & \textrm{ if }
r_{\Gamma}(u,\mathfrak{m}) \geq 0,
\\ -1 & \textrm{ otherwise. }\end{array} \right.
$$

\noindent By Lemma \ref{Minorlem1}, we get for $\alpha \in \mathfrak{m}$,
$$
S_{i}(A\alpha)=\sum_{\substack{P < p \leq X \\ p \equiv i \pmod{g}}}
(\log{p})e(\alpha Ap) \ll (X^{\frac{4}{5}} + Xq^{-\frac{1}{2}} +
X^{\frac{1}{2}}q^{\frac{1}{2}}) (\log{X})^3 \ll
X^{1-3\delta}(\log{X})^3.
$$

\noindent This implies that
$$\sup_{\alpha \in \mathfrak{m}}
\left|S_{i}(A\alpha)S_{j}(B\alpha)\right| \ll [X^{1-3\delta}(\log{X})^3]^2 .
$$

\noindent From the proof of \cite[Lemma 1]{BKW}, we know that
$$
\int_{0}^{1}\left|S_{i}(A\alpha)S_{j}(B\alpha)\right| d\alpha \ll
X\log{X} \qquad \textrm{and} \qquad
\int^{1}_{0}|K(-\alpha)|^{t}d\alpha \ll N^{t - k(1-\delta)}.
$$

\noindent By combining these bounds, we have
\begin{eqnarray*}
\sum_{\kappa N < n \leq N} |r_{\Gamma}(2f(n),\mathfrak{m})| &\ll&
(X\log{X})^{1-\frac{1}{t}}N^{1-\frac{(1-\delta)k}{t}}
(X^{1-3\delta}(\log{X})^3)^{\frac{2}{t}} \\ &\ll&
NX^{1-\frac{5\delta}{t}}(\log{X})^{2},
\end{eqnarray*}
which proves the lemma.
\end{proof}

\section{Major Arc}

Let $Y$ be a real number with $1 \leq Y \leq X^{\frac{\delta} {k}}$.
In \cite[Lemma 2]{BKW}, the authors proved that for all $n$
satisfying $\kappa N < n \leq N$, with the possible exception of
$O(N^{1+\epsilon}Y^{-1})$ values of $n$
$$
r(2f(n);\mathfrak{M}) \gg XY^{-\frac{1}{2}}(\log X)^{-1},
$$
where $r(2f(n);\mathfrak{M})= \int_{\mathfrak{M}}S\lp\alpha\rp^2
e(-\alpha \cdot 2f(n))d\alpha$ and $S(\alpha) = \sum_{\substack{P<p
\leq X }} (\log{p})e(\alpha p)$.

In this section, we show that the same result holds for
$r_{\Gamma}(2f(n);\mathfrak{M})$. To do this, we need some lemmas
which concern the residue class condition; $i, j \pmod{g}$ and the
coefficient condition; $A$, $B$.

First we state the basic properties of exceptional characters, which
are established by Davenport \cite{Dav}.

\begin{lem}
There is a constant $c_1 > 0$ such that $L(\sigma, \chi) \neq 0$
whenever
$$
\sigma \geq 1-\frac{c_1}{\log{P}},
$$
for all primitive Dirichlet characters $\chi$ of modulus $q \leq P$,
with the possible exception of at most one primitive character
$\tilde{\chi}$ $\pmod{ \tilde{r}}$. If it exists, $\tilde{\chi}$ is
quadratic, and the unique exceptional real zero $\tilde{\beta}$ of
$L(s, \tilde{\chi})$ satisfies
\begin{equation*}
\frac{c_2}{\tilde{r}^{\frac{1}{2}}\log^2{\tilde{r}}} \leq 1-\tilde{\beta}
\leq \frac{c_1}{\log{P}}, \label{Lzero}
\end{equation*}
for a constant $c_2>0$.
\end{lem}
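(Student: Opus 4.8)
\noindent The plan is to derive this from the classical theory of the zero-free region of Dirichlet $L$-functions together with Page's effective lower bound for an exceptional zero; the whole statement is due to Davenport \cite{Dav}, so in practice one simply cites him, but I sketch the three ingredients one would assemble.

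First I would establish the zero-free region. Fix a primitive character $\chi \pmod q$ with $q \le P$ and suppose $\beta + i\gamma$ is a zero of $L(s,\chi)$. Starting from the nonnegativity of $3 + 4\cos\theta + \cos 2\theta$ — equivalently from $\log\lvert \zeta(\sigma)^{3} L(\sigma+i\gamma,\chi)^{4} L(\sigma+2i\gamma,\chi^{2})\rvert \ge 0$ for $\sigma > 1$ — one plays the simple pole of $\zeta$ at $s = 1$ against the zero at $\beta + i\gamma$, using the standard bounds for $L'/L$, to obtain $L(s,\chi) \ne 0$ for $\sigma > 1 - c/\log\lp q(\lvert t\rvert + 2)\rp$. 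This argument succeeds \emph{except} when $\chi$ is real (hence quadratic) and $\gamma$ is small: then $\chi^{2}$ is principal, $L(\sigma + 2i\gamma,\chi^{2})$ contributes its own pole near $s = 1$, and one is left with the possibility of one real zero $\beta$ close to $1$. Restricting to bounded $\lvert t\rvert$ absorbs the $t$-dependence and produces the claimed uniform region $\sigma \ge 1 - c_1/\log P$, obstructed for each modulus by at most one real zero of at most one real character.

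Next I would prove uniqueness across moduli, i.e.\ upgrade ``at most one bad zero per modulus'' to ``at most one bad character $\tilde\chi$ altogether''. Suppose $\chi_{1}\pmod{r_{1}}$ and $\chi_{2}\pmod{r_{2}}$ are distinct primitive quadratic characters with real zeros $\beta_{1},\beta_{2}$ very close to $1$. Consider the Dedekind zeta function $\zeta_{K}(s)$ of the biquadratic field $K$ determined by $\chi_1$ and $\chi_2$: it has a simple pole at $s = 1$, nonnegative Dirichlet coefficients, and, being divisible by $L(s,\chi_{1})$ and by $L(s,\chi_{2})$, it vanishes at both $\beta_1$ and $\beta_2$. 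A Landau-type argument — a Dirichlet series with nonnegative coefficients cannot vanish at a real point too close to a simple pole — then forces $\min(1-\beta_{1},\,1-\beta_{2}) \gg 1/\log(r_{1}r_{2})$, contradicting both zeros lying within $c_1/\log P$ of $1$ once $c_1$ is small enough. One then defines $\tilde\chi$ to be a primitive quadratic character of modulus at most $P$ whose real zero lies closest to $1$, should such a character exist, and the dichotomy follows. I expect this step to be the main obstacle: it is a form of the Deuring--Heilbronn phenomenon, and the delicate points are the nonnegativity of the coefficients of the auxiliary zeta function and the bookkeeping of the conductor of $\chi_1\chi_2$ and of primitive versus imprimitive inducing characters.

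Finally I would prove Page's lower bound. If $\tilde\chi\pmod{\tilde r}$ with exceptional zero $\tilde\beta$ exists, Dirichlet's class number formula gives $L(1,\tilde\chi) \gg \tilde r^{-1/2}$ with an effective implied constant, since the class number is at least $1$ and in the real case the regulator factor only helps. On the other side, standard estimates for the truncated Dirichlet series of $L(s,\tilde\chi)$ and $L'(s,\tilde\chi)$ (via the P\'olya--Vinogradov inequality and partial summation) give $L'(\sigma,\tilde\chi) \ll \log^{2}\tilde r$ uniformly for $1 - c_1/\log P \le \sigma \le 1$, using $\tilde r \le P$. Hence by the mean value theorem and $L(\tilde\beta,\tilde\chi) = 0$,
$$
L(1,\tilde\chi) = L(1,\tilde\chi) - L(\tilde\beta,\tilde\chi) = (1-\tilde\beta)\,L'(\xi,\tilde\chi) \ll (1-\tilde\beta)\log^{2}\tilde r
$$
for some $\xi \in (\tilde\beta, 1)$, so $1 - \tilde\beta \gg \tilde r^{-1/2}\log^{-2}\tilde r$, which is the left inequality for a suitable $c_2 > 0$; the right inequality $1 - \tilde\beta \le c_1/\log P$ is immediate from $\tilde\beta$ lying in the region $\sigma \ge 1 - c_1/\log P$. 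Beyond the second step the remaining work is routine: the $3 + 4\cos\theta + \cos 2\theta$ device, standard estimates for $L'/L$, and the class number formula.
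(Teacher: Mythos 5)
The paper does not actually prove this lemma: it is introduced with the single sentence ``First we state the basic properties of exceptional characters, which are established by Davenport \cite{Dav},'' and no argument is given. Your instinct that ``in practice one simply cites him'' is therefore exactly what the paper does, so there is no internal proof to compare against. That said, the three-ingredient outline you give (the de la Vall\'ee Poussin zero-free region via $3+4\cos\theta+\cos 2\theta$ with the failure mode at real $\chi$ and small $\gamma$; the Landau--Page uniqueness across moduli via the nonnegative Dirichlet series $\zeta(s)L(s,\chi_1)L(s,\chi_2)L(s,\chi_1\chi_2)$, i.e.\ the Dedekind zeta function of the biquadratic field up to a finite Euler product; and Page's lower bound via the class number formula $L(1,\tilde\chi)\gg\tilde r^{-1/2}$ combined with $L'(\sigma,\tilde\chi)\ll\log^2\tilde r$ near $\sigma=1$ and the mean value theorem) is the correct and standard decomposition, and it is the one in Davenport's Chapters 13--14 and 21.

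One correction in your second step: the Landau argument applied to $F(s)=\zeta(s)L(s,\chi_1)L(s,\chi_2)L(s,\chi_1\chi_2)$ does \emph{not} force both zeros away from $1$; it forces $\min(\beta_1,\beta_2)<1-c/\log(r_1r_2)$, i.e.\ $\max(1-\beta_1,1-\beta_2)\gg 1/\log(r_1r_2)$, so at most one of $\beta_1,\beta_2$ can sit inside the zero-free strip. Your statement ``$\min(1-\beta_1,1-\beta_2)\gg 1/\log(r_1r_2)$'' is false (a genuine Siegel zero can be far closer to $1$ than $1/\log$), but since $\max\gg 1/\log(r_1r_2)\geq 1/(2\log P)$ already contradicts both zeros lying in $[1-c_1/\log P,1)$ once $c_1$ is small, your conclusion survives; only the intermediate inequality needs the $\min$ changed to $\max$. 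You correctly flag the remaining bookkeeping (the conductor of $\chi_1\chi_2$ versus its primitive inducing character, which is the gap between $\zeta_K$ and the product of $L$-functions) as the delicate point.
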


The following lemma is a modification of \cite[Theorem 7]{Ga}. For
the proof of the lemma, we follow the proof of \cite[Theorem 7]{Ga}.
A new ingredient in our proof is the orthogonality relations of
Dirichlet characters.

\begin{lem}\label{Majorlem1}
Suppose that $\frac{x}{P} \leq h \leq x$ and
$\exp(\log^{\frac{1}{2}}{x}) \leq P \leq x^b$. If there is no
exceptional character, we have
$$
\sum_{q \leq P}\sum_{\chi}\!{}^* \sum_{\substack{x \\ p \equiv i
\pmod{g}}}^{x+h} \chi(p)\log{p} \ll
h\exp(-c_3\frac{\log{x}}{\log{P}})
$$
for a constant $c_3$, where $\sum\!{}^*$ denotes that the sum is
taken over all primitive Dirichlet characters of modulus $q$ and if
there is the exceptional character, the right hand side may be
replaced by $ h(1-\tilde{\beta})\log{P} \exp(-c_3
\frac{\log{X}}{\log{P}}). $ Here the term with $q=1$ is read as
follows: if there is no exceptional character, it is
$$
\sum_{\substack{x \\ p \equiv i \pmod{g}}}^{x+h}\log{p} -
\sum_{\substack{x < n \leq x+h  \\ n \equiv i \pmod{g}}}1
$$
and if there is the exceptional character, it is
$$
\sum_{\substack{x \\ p \equiv i
\pmod{g}}}^{x+h}\tilde{\chi}(p)\log{p} + \sum_{\substack{x < n \leq
x+h \\ n \equiv i \pmod{g}}}n^{\tilde{\beta}-1}.
$$
\end{lem}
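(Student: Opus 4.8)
The plan is to run the proof of \cite[Theorem 7]{Ga} essentially verbatim, the one new ingredient being that the congruence $p\equiv i\pmod{g}$ is detected through the orthogonality relations of the Dirichlet characters modulo $g$. Since $(i,g)=1$ we may write $\mathbf{1}_{n\equiv i\,(g)}=\varphi(g)^{-1}\sum_{\psi\bmod g}\bar\psi(i)\,\psi(n)$, so that for a primitive character $\chi$ modulo $q$ one has
$$\sum_{\substack{x<p\le x+h\\ p\equiv i\,(g)}}\chi(p)\log p=\frac1{\varphi(g)}\sum_{\psi\bmod g}\bar\psi(i)\sum_{x<p\le x+h}(\chi\psi)(p)\log p ,$$
and the same identity turns the count $\#\{x<n\le x+h:n\equiv i\,(g)\}$ that forms the $q=1$ term into $\varphi(g)^{-1}\sum_{\psi}\bar\psi(i)\sum_{x<n\le x+h,\,(n,g)=1}\psi(n)$ (and similarly in the exceptional case, with the $\tilde\beta$-sum in place of the count and with $\tilde\chi$ inserted). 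Hence it suffices to bound, uniformly in $\psi\bmod g$, the $\psi$-twisted sum $\sum_{q\le P}\sum_{\chi}{}^{*}\sum_{x<p\le x+h}(\chi\psi)(p)\log p$ together with its main term; summing back over $\psi$ against $\bar\psi(i)/\varphi(g)$ (and using $\varphi(g)^{-1}\sum_\psi|\bar\psi(i)|=1$) then recombines the main terms into precisely the subtracted quantity in the $q=1$ term of the statement.

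The next step would be to notice that for $\chi$ primitive of modulus $q\le P$ and $\psi$ a fixed character modulo $g<P^{4}$, the product $\chi\psi$ has conductor at most $qg<P^{5}$, and that for fixed $\psi$ the primitive characters inducing the various $\chi\psi$ are pairwise distinct (if $\chi_1\psi$ and $\chi_2\psi$ are induced by the same primitive character, then $\chi_1$ and $\chi_2$ agree on all integers coprime to the relevant modulus, hence coincide). Replacing $\chi\psi$ by its inducing primitive character costs $\sum_{p\mid qg}\log p\ll\log P$ per pair, hence $O(P^{2}\log P)$ in all, which is negligible once the admissible exponent $b$ in $P\le x^{b}$ is taken small enough. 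One then runs the argument of \cite[Theorem 7]{Ga} with $\chi$ replaced throughout by the primitive character inducing $\chi\psi$ and with the range of moduli $q\le P$ replaced by the range of conductors, which lies in $[1,P^{5}]$: the explicit formula for the relevant $L$-function is applied, and the zero-free region together with the large-sieve zero-density estimate is invoked for the family of primitive characters of modulus $\le P^{5}$. Since $\log P^{5}\asymp\log P$, the outcome has exactly the claimed shape, $\ll h\exp(-c_3\log x/\log P)$, respectively $h(1-\tilde\beta)\log P\exp(-c_3\log x/\log P)$, after adjusting the constant $c_3$.

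The hard part will be the bookkeeping around the exceptional character. Enlarging the range of moduli from $\le P$ to $\le P^{5}$ could in principle expose an exceptional primitive character of conductor between $P$ and $P^{5}$, so one should read the basic properties of exceptional characters recalled above (after Davenport) relative to the family that actually occurs here, namely moduli $\le P^{5}$; this is harmless, since $\log P^{5}\asymp\log P$ leaves the zero-free region $\sigma\ge 1-c_1/\log P$ and the lower bound $1-\tilde\beta\gg\tilde r^{-1/2}\log^{-2}\tilde r$ unchanged up to constants, and there is still at most one such character. With $\tilde\chi$ so understood, the single exceptional term thrown up by \cite[Theorem 7]{Ga} at level $P^{5}$ is, after the orthogonality refolding, exactly the displayed modification of the $q=1$ term; checking this matching of main terms (and of the exceptional contribution) is the real work, routine in spirit but requiring care.
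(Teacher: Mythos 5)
Your proposal is correct and runs on essentially the same rails as the paper's proof: detect the congruence $p\equiv i\pmod g$ via orthogonality of characters modulo $g$, reduce to zeros of $L(s,\chi\psi)$, and invoke Gallagher's density estimate over the enlarged family of conductors $\le gP$ (or $P^5$). Where the paper does the conductor bookkeeping by an explicit case analysis on $v_g(q)$ and $(q,g)=m$ (first for prime $g$, then lifting to composite $g$ with a $d(m)$ factor), your observation that for fixed $\psi$ the map $\chi\mapsto(\chi\psi)^*$ is injective on primitive $\chi$ of modulus $\le P$ accomplishes the same thing more cleanly. One small slip worth fixing: the cost of replacing $L(s,\chi\psi)$ by $L(s,(\chi\psi)^*)$ is not $\sum_{p\mid qg}\log p\ll\log P$ per pair but rather $\ll T\log P$ per pair, since each extraneous Euler factor $(1-\chi^*(p)p^{-s})^{-1}$ contributes $\asymp T\log p/\pi$ poles on $\Re(s)=0$ inside $|\Im(s)|\le T$ (as the paper records with its $c_{0,q,q^*,T}$). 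This enters only the $\alpha=0$ endpoint of the zero-density integral, gets multiplied by $1/x$, and the total $\ll gP^2 T\log P/x$ is still negligible once $b$ is small — so the conclusion is unaffected, but the stated per-pair bound should be corrected.
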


\begin{proof}
Let
$$
\psi(x) := \sum_{n\leq x}\Lambda(n), \,\,\, \psi(x,\chi) := \sum_{n
\leq x}\chi(n)\Lambda(n), \,\,\, \mbox{and} \,\,\, \psi(x, \chi; i,
g) := \sum_{\substack{n \leq x \\ n \equiv i
\pmod{g}}}\chi(n)\Lambda(n).
$$

\noindent Using the orthogonality relations of Dirichlet characters,
we have
\begin{equation}
\psi(x, \chi; i, g) = \frac{1}{\varphi(g)} \sum_{n \leq
x}(\sum_{\chi^{\prime} }
\bar{\chi^{\prime}}(i)\chi^{\prime}(n))\chi(n)\Lambda(n) =
\frac{1}{\varphi(g)} \sum_{\chi^{\prime}} \bar{\chi^{\prime}}(i)
\psi(x, \chi\cdot\chi^{\prime}), \label{Psi}
\end{equation}

\noindent where $\chi^{\prime}$ varies in the set of Dirichlet
characters of modulus $g$ and $\chi\cdot\chi'(n) := \chi(n)\chi'(n)$.
For $q \leq T \leq x^{\frac{1}{2}}$
$$
\psi(x,\chi) = \delta_{\chi}x - \sum_\rho \frac{x^{\rho}}{\rho} +
O(\frac{x\log^2{x}}{T}),
$$

\noindent where $\delta_{\chi}=1$ or 0 according to whether $\chi =
\chi_{0}$ or not, and the sum on the right is over the zeros $\rho$
of $L(s,\chi)$ in $0 \leq \Real(\rho) \leq 1$,
$|\Imaginary(\rho)|\leq T$. By (\ref{Psi})
\begin{equation}
\psi(x, \chi; i, g) = \frac{1}{\varphi(g)} \sum_{\chi^{\prime} }
\bar{\chi^{\prime}}(i) (\delta_{\chi\cdot\chi^{\prime}}x -
\sum_{\rho}\frac{x^{\rho}}{\rho})
 + O(\frac{x\log^2{x}}{T}), \label{majoreq1}
\end{equation}

\noindent where the second sum is over the zeros $\rho$ of
$L(s,\chi\cdot\chi^{\prime})$ in $0 \leq \Real(\rho) \leq 1$,
$|\Imaginary(\rho)|\leq T$.
Since
\begin{equation*}
\label{majoreq2} \psi(x+h,\chi;i,g) - \psi(x,\chi;i,g) =
\sum_{\substack{x
\\ p\equiv i \pmod{g}}}^{x+h}\chi(p)\log{p} + O(x^{\frac{1}{2}}),
\end{equation*}

\noindent by (\ref{majoreq1})
\begin{equation}
\sum_{\substack{x \\ p\equiv i \pmod{g}}}^{x+h}\chi(p)\log{p} =
\frac{1}{\varphi(g)}(\sum_{\chi^{\prime}}\bar{\chi^{\prime}}
(i)(\delta_{\chi \cdot \chi^{\prime}}h -
\sum_{\rho}\frac{(x+h)^{\rho}-x^{\rho}}{\rho})) + O(\frac{x\log^2
x}{T})\nonumber.
\end{equation}

\noindent Thus
\begin{equation}
\sum_{q \leq P}\sum_{\chi}\!{}^{*}\sum_{\substack{x
\\ p\equiv i \pmod{g}}}^{x+h}\chi(p)\log{p}
\ll \sum_{q \leq P}\sum_{\chi}\!{}^{*}\sum_{\chi^\prime } h(\sum_{\rho}
x^{\beta-1} + \frac{P^2}{T}), \label{majoreq4}
\end{equation}

\noindent where the fourth sum of the right hand side is over the
zeros $\rho = \beta + \gamma i$ of $L(s,\chi\cdot\chi^{\prime})$ in
$0 \leq \Real(\rho) \leq 1$, $|\Imaginary(\rho)|\leq T$. Let
$N_{\chi}(\alpha,T)$ be the number of zeros $\rho$ of $L(s,\chi)$ in
the rectangle
$$
\{ \rho \in \mathbb{C} : \alpha \leq \textrm{Re($\rho$)} \leq 1,
|\textrm{Im($\rho$)}| \leq T \}.
$$

\noindent Then the quadruple sum on the right hand side of
(\ref{majoreq4}) is
\begin{eqnarray}
&-&\int^{1}_{0} x^{\alpha - 1} \frac{\partial}{\partial\alpha}
(\sum_{q \leq P}\sum_{\chi}\!{}^{*}\sum_{\chi^\prime}
N_{\chi\cdot\chi^\prime}(\alpha, T))d\alpha \nonumber
\\
&=& \int^{1}_{0}x^{\alpha-1}\log{x}\sum_{q \leq
P}\sum_{\chi}\!{}^{*}\sum_{\chi^\prime} N_{\chi\cdot\chi^\prime}(\alpha,T)d\alpha
+ \frac{1}{x}\sum_{q \leq P}\sum_{\chi}\!{}^{*}\sum_{\chi^\prime}
N_{\chi\cdot\chi^\prime}(0,T).
\label{Nval}
\end{eqnarray}

\noindent Consider the decomposition of a character $\chi$ of
modulus $q = \prod_{p}p^{{a_p}}$
$$\chi = \prod_{p}\chi_{p^{a_p}},$$
where $\chi_{p^{a_p}}$ is a character of modulus $p^{a_p}$. Assume $g$ is a
prime and $g|q$. Then the conductor of $\chi\cdot\chi'$ is $q$
except the case that $v_{g}(q)=1$ and the $p$-parts of
decompositions of $\chi$ and $\chi'$ are inverse each other, in this
case the conductor of $\chi\cdot\chi'$ is $\frac{q}{g}$. Therefore,
$$
\sum_{\chi}\!{}^{*}\sum_{\chi'}N_{\chi\cdot\chi'}(a,T)
 = (\varphi(g)-1)(\sum_{\chi_1}N_{\chi_1}(a,T)
 + \sum_{\chi_2}\!{}^{*}N_{\chi_2}(a,T)),
$$
where the first sum on the right hand side varies in the set of
non-primitive characters of modulus $q$ which are induced by a
primitive character of modulus $\frac{q}{g}$ and the second one on
the right hand side varies in the set of primitive characters of
modulus $q$. Let $\chi$ be a Dirichlet character of modulus $q$
induced by primitive character $\chi^{*}$ of modulus $q^{*}$. Then,
$$
L(s,\chi)\prod_{\substack{p|q \\ p \nmid q^{*}}}
\frac{1}{1-\chi^{*}(p)p^{-s}} = L(s,\chi^{*}).
$$
Each factor $\frac{1}{1-\chi^{*}(p)p^{-s}}$ has a pole at
$$
s=\frac{2\pi i }{\log{p}}(l + \frac{m_p}{\varphi(q^*)}),
$$
where $m_{p}$ be the smallest positive integer satisfying
$\chi^{*}(p) = e(\frac{m_p}{\varphi(q^*)})$ and $l$ be an
integer such that $|\frac{2\pi}{\log{p}}(l + \frac{m_p}{\varphi(q^*)})| < T$.
Therefore
$$
N_{\chi}(a,T) = N_{\chi^{*}}(a,T) + c_{a,q,q^*,T},
$$
where $c_{a,q,q^*,T}=0$ if $a>0$ and
$c_{0,q,q^*,T}=\sum_{\substack{p|q \\ p\nmid q^*}} ([\frac{T \log{p}
}{\pi}]+1)$. Hence, for a prime $g$
\begin{eqnarray}
\sum_{\substack{q \leq P \\ g|q }} \sum_{\chi}\!{}^{*}\sum_{\chi'}
N_{\chi\cdot\chi'}(a,T) &=& (\varphi(g)-1)(\sum_{q \leq
\frac{P}{g}}\sum_{\chi}\!{}^{*} (N_{\chi}(a,T) +
c_{a,q,\frac{q}{g},T})
+ \sum_{q\leq P}\sum_{\chi}\!{}^{*}N_{\chi}(a,T)) \nonumber \\
& < & 2\varphi(g)\sum_{q \leq P}\sum_{\chi}\!{}^{*}N_{\chi}(a,T)
+ \varphi(g)\sum_{q \leq \frac{P}{g}}\sum_{\chi}\!{}^{*}c_{a,q,\frac{q}{g},T}.
\nonumber
\end{eqnarray}

\noindent Similarly one can prove its generalization to a composite
$g$
$$ \sum_{\substack{q \leq P \\ (q,g)=m}}\sum_{\chi}\!{}^{*}\sum_{\chi'}
N_{\chi\cdot\chi'}(a,T) <
d(m)\varphi(g)\sum_{q \leq gP}\sum_{\chi}\!{}^{*}N_{\chi}(a,T)
+d(m)\varphi(g)\sum_{q \leq gP}\sum_{\chi}\!{}^{*}c_{a,q,\frac{q}{m},T},$$

\noindent where $d(m)$ be the number of divisors of $m$. Therefore
if $a>0$,
\begin{eqnarray}
\sum_{q \leq P}\sum_{\chi}\!{}^{*}\sum_{\chi'}N_{\chi\cdot\chi'}(a,T)
& = & \sum_{\substack{q \leq P\\(q,g)=1}}
\sum_{\chi}\!{}^{*}\sum_{\chi'}N_{\chi\cdot\chi'}(a,T)
+\sum_{\substack{q \leq P\\(q,g)>1}}\sum_{\chi}\!{}^{*}\sum_{\chi'}N_{\chi\cdot\chi'}(a,T)
\nonumber \\
& \leq & \sum_{q \leq gP}\sum_{\chi}\!{}^{*}N_{\chi}(a,T)
+ d(g)^2\varphi(g)\sum_{q \leq gP}\sum_{\chi}\!{}^{*}N_{\chi}(a,T)
\nonumber \\
& \ll & \sum_{q \leq gP}\sum_{\chi}\!{}^{*}N_{\chi}(a,T),\nonumber
\end{eqnarray}
and if $a=0$ since
$c_{0,q,\frac{q}{m},T} \leq \frac{T\log{m}}{\pi} +d(m)$,
$$
\sum_{q \leq
P}\sum_{\chi}\!{}^{*}\sum_{\chi'}N_{\chi\cdot\chi'}(0,T) \ll \sum_{q
\leq gP}\sum_{\chi}\!{}^{*}N_{\chi}(0,T)+\sum_{q \leq
gP}\sum_{\chi}\!{}^{*}(\frac{T\log{g}}{\pi} +d(g)).
$$
Thus (\ref{Nval}) is
\begin{eqnarray}
&\ll& \int^{1}_{0}x^{\alpha -1}\log{x}\sum_{q \leq gP}
\sum_{\chi}\!{}^{*}N_{\chi}(\alpha, T)d\alpha +\frac{1}{x}(\sum_{q\leq
gP}\sum_{\chi}\!{}^{*}N_{\chi}(0, T) +
\sum_{q \leq gP}\sum_{\chi}\!{}^{*}T) \nonumber\\
&\leq& \int^{1-\theta(T)}_{0}x^{\alpha -1}\log{x}\sum_{q \leq gP}
\sum_{\chi}\!{}^{*}N_{\chi}(\alpha, T)d\alpha +\frac{1}{x}\sum_{q\leq
gP}\sum_{\chi}\!{}^{*}N_{\chi}(0, T) + \frac{gP^2T}{x}
\nonumber\\
&\ll& x^{-\frac{1}{2} \theta(T)} + \frac{P^2T}{x}, \label{theta}
\end{eqnarray}

\noindent where
$$
\theta(T) = \left\{ \begin{array}{lll}  \displaystyle\frac{1}{\log{T}} & \textrm{
if there is no exceptional character,}
\\ \displaystyle\frac{c_2}{\log{T}}\log{\frac{e\cdot
c_1}{(1-\tilde{\beta})\log{T}}}& \textrm{ otherwise. }\end{array}
\right.
$$

\noindent For (\ref{theta}), we used \cite[Theorem 6]{Ga};
$$
\sum_{q \leq T}\sum_{\chi}\!{}^{*} N_{\chi}(\alpha, T) \ll
T^{c(1-\alpha)}
$$

\noindent and assumed $T^{c} \leq x^{\frac{1}{2}}$ and $T>gP$. If we
choose $T = P^5$ and $b = \frac{1}{10c}$,
then $\frac{P^2T}{x} \ll x^{-\frac{1}{2}}$ and the lemma follows.
\end{proof}

From Lemma \ref{Majorlem1} and the argument below \cite[Lemma 4.3]{MV}, we have
the following modification of \cite[Lemma 4.3]{MV}.

\begin{lem} \label{Majorlem2}
If the exceptional character does not occur, there are positive
absolute constants $c_4, c_5$ which satisfy
$$\sum_{q \leq P}\sum_{\chi}\!{}^* \max_{x \leq N}\max_{h \leq N}
(h + \frac{N}{P})^{-1} \bigg|\sum_{\substack{x-h \\ p \equiv i
\pmod{g}}}^{x} \chi(p)\log{p}\bigg| \ll \exp(-c_4
\frac{\log{N}}{\log{P}}) $$ for $\exp(\log^{\frac{1}{2}}{N}) \leq P
\leq N^{c_5}$ and if the exceptional character occurs, the right
hand side  may be replaced by $ (1-\tilde{\beta})\log{P} \exp(-c_4
\frac{\log{N}}{\log{P}}).$ Here the term with $q=1$ is read as
follows: if there is no exceptional character, it is
$$
\sum_{\substack{x-h \\ p \equiv i \pmod{g}}}^{x}\log{p} -
\sum_{\substack{x-h < n \leq x \\ n>0 \\ n \equiv i \pmod{g}}}1
$$
and if there is the exceptional character, it is
$$
\sum_{\substack{x-h \\ p \equiv i
\pmod{g}}}^{x}\tilde{\chi}(p)\log{p} + \sum_{\substack{x-h < n \leq
x \\ n>0 \\ n \equiv i \pmod{g}}}n^{\tilde{\beta}-1}.
$$
\end{lem}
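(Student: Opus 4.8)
The plan is to follow the proof of \cite[Lemma 4.3]{MV}: I would first reduce the assertion, via the identity (\ref{Psi}), to a bound for a single twisted Chebyshev function; then insert the maxima over $x$ and $h$ \emph{inside} the truncated explicit formula for the \emph{difference} $\psi(x,\lambda)-\psi(x-h,\lambda)$; and finally invoke the zero-density computation already carried out in the proof of Lemma \ref{Majorlem1}. The main difficulty to watch is that the difference must be kept together: estimating $|\psi^{\flat}(x,\lambda)-\psi^{\flat}(x-h,\lambda)|$ by $|\psi^{\flat}(x,\lambda)|+|\psi^{\flat}(x-h,\lambda)|$ discards the cancellation $\tfrac{x^{\rho}-(x-h)^{\rho}}{\rho}=\int_{x-h}^{x}t^{\rho-1}\,dt$ and, when summed over the $\ll P^{6}$ relevant characters, loses a factor $\approx P$ that the exponential saving $\exp(-c\log N/\log P)$ — a mere constant when $P$ is a fixed power of $N$ — cannot afford.

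For the reduction, by (\ref{Psi}) and $\sum_{x-h<p\le x,\,p\equiv i\,(g)}\chi(p)\log p=\psi(x,\chi;i,g)-\psi(x-h,\chi;i,g)+O(x^{1/2})$ it is enough to bound
\[
\sum_{q\le P}\sum_{\chi}\!{}^{*}\ \sum_{\chi'}\ \max_{0<x\le N}\ \max_{0<h\le N}\ \bigl(h+\tfrac{N}{P}\bigr)^{-1}\bigl|\psi^{\flat}(x,\chi\chi')-\psi^{\flat}(\max(0,x-h),\chi\chi')\bigr|,
\]
where $\chi'$ runs over the characters modulo $g$ and, for $\lambda=\chi\chi'$, $\psi^{\flat}(y,\lambda)$ is $\psi(y,\lambda)$ with its pole term $\delta_{\lambda}y$ and, when $\lambda$ is induced by the exceptional character $\tilde\chi$, also the term $y^{\tilde\beta}/\tilde\beta$, removed (so $\psi^{\flat}(0,\lambda)=0$); the $q=1$ and exceptional terms displayed in the statement are precisely these subtractions transported through (\ref{Psi}). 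Next I would fix a small $\eta>0$ and set $T=(gP)^{A}$ with $A$ a large absolute constant. For $0<x\le N^{1-\eta}$ I would bound the inner quantity trivially by $2(h+\tfrac{N}{P})^{-1}\max_{0\le y\le N^{1-\eta}}|\psi^{\flat}(y,\lambda)|\ll(P/N)N^{1-\eta}\log N$, whose total over the $\ll P^{6}$ triples $(q,\chi,\chi')$ is $\ll P^{7}N^{-\eta}\log N\ll\exp(-c_{4}\log N/\log P)$ once $c_{5}$ is chosen small in terms of $\eta$ (using $g<P^{4}$, $P\le N^{c_{5}}$, $P\ge\exp(\log^{1/2}N)$).

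For $x>N^{1-\eta}$ I would apply the truncated explicit formula
\[
\psi(u,\lambda)-\psi(v,\lambda)=\delta_{\lambda}(u-v)-\!\!\sum_{|\Imaginary\rho|\le T}\frac{u^{\rho}-v^{\rho}}{\rho}+O\!\left(\frac{u\log^{2}N}{T}\right)\qquad(T\le v^{1/2})
\]
to the part of $(\max(0,x-h),x]$ with both endpoints $\ge T^{2}$, treating the residual piece below $T^{2}$ trivially (its total is $\ll P^{7}T^{2}N^{-1}\log N$, negligible for $c_{5}$ small). Writing $\tfrac{u^{\rho}-v^{\rho}}{\rho}=\int_{v}^{u}t^{\rho-1}\,dt$ and using $\bigl|\int_{v}^{u}t^{\rho-1}\,dt\bigr|\ll\min\bigl((u-v)v^{\Real\rho-1},\ u^{\Real\rho}|\rho|^{-1}\bigr)$ together with $u-v\le h\le x$, $u=x\le N$, $h+\tfrac{N}{P}\ge h$, and $h+\tfrac{N}{P}\ge\tfrac12 x$ whenever $h\ge\tfrac12 x$, one checks the \emph{uniform} bound
\[
\bigl(h+\tfrac{N}{P}\bigr)^{-1}\bigl|\psi^{\flat}(x,\lambda)-\psi^{\flat}(\max(0,x-h),\lambda)\bigr|\ \ll\ \sum_{\substack{|\Imaginary\rho|\le T\\ \rho\ne\tilde\beta}}N^{(1-\eta)(\Real\rho-1)}\ +\ \frac{P\log^{2}N}{T}
\]
valid for every $x>N^{1-\eta}$ and $0<h\le N$ (the $1/\Real\rho$ or $1/|\rho|$ accompanying the finitely many low-lying zeros being absorbed in the usual way); summed over the triples, $P\log^{2}N/T$ is negligible for $A$ large.

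It then remains to estimate $\sum_{q\le P}\sum_{\chi}\!{}^{*}\sum_{\chi'}\sum_{|\Imaginary\rho|\le T,\,\rho\ne\tilde\beta}N^{(1-\eta)(\Real\rho-1)}$, and I would do this exactly as the quadruple sum (\ref{majoreq4})--(\ref{theta}) in the proof of Lemma \ref{Majorlem1}: express it through $N_{\chi\chi'}(\alpha,T)$ and integrate by parts, apply the reduction of $\sum_{q\le P}\sum_{\chi}\!{}^{*}\sum_{\chi'}N_{\chi\chi'}(\alpha,T)$ to $\sum_{q\le gP}\sum_{\chi}\!{}^{*}N_{\chi}(\alpha,T)$ obtained there, insert Gallagher's bound $\sum_{q\le gP}\sum_{\chi}\!{}^{*}N_{\chi}(\alpha,T)\ll(gP)^{c(1-\alpha)}$ (valid since $T>gP$), and cut the $\alpha$-integral off at $1-\theta(T)$ by the zero-free region, with the same $\theta(T)$ as in the excerpt. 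Because $c_{5}$ is small, $c\log T<(1-\eta)\log N$, so the $(1-\eta)\log N$ produced by the integration by parts is cancelled by the $\bigl((1-\eta)\log N-c\log T\bigr)^{-1}$ coming out of the $\alpha$-integral; no stray logarithm survives, and the bound becomes $\ll(N^{1-\eta})^{-\theta(T)/2}\ll\exp(-c_{4}\log N/\log P)$ when there is no exceptional character, and $\ll(1-\tilde\beta)\log P\,\exp(-c_{4}\log N/\log P)$ when there is, the extra factor arising from the exceptional branch of $\theta(T)$ exactly as in Lemma \ref{Majorlem1}; since $g<P^{4}$ gives $\log(gP)\asymp\log P$, the change of constant is harmless. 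Together with the routine choice of $\eta,A,c_{4},c_{5}$, this proves the lemma.
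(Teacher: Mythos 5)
The paper gives no proof of this lemma, merely citing Lemma \ref{Majorlem1} together with ``the argument below \cite[Lemma 4.3]{MV}''; your write-up is a careful and correct reconstruction of exactly that argument, reducing via the orthogonality identity and the truncated explicit formula (kept as a difference so that $\frac{x^\rho-(x-h)^\rho}{\rho}=\int_{x-h}^x t^{\rho-1}\,dt$ can be exploited) to the same zero-density computation already carried out in the proof of Lemma \ref{Majorlem1}. This is essentially the approach the paper intends, spelled out in detail, and I see no gaps.
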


For a Dirichlet character $\chi$ modulo $q$, define
$$
S_{i}(\chi,\eta) := \sum_{\substack{P < p \leq X \\ p \equiv i
\pmod{g}}}(\log{p})\chi(p)e(p\eta),
$$
and
$$
T_{i}(\eta) := \sum_{\substack{P<n\leq X \\ n \equiv i
\pmod{g}}}e(n\eta), \qquad \widetilde{T}_{i}(\eta) :=
-\sum_{\substack{P<n\leq X \\ n \equiv i
\pmod{g}}}n^{\tilde{\beta}-1}e(n\eta),
$$
where the last one is defined only if there is an exceptional
character.

\noindent Let $\chi_0$ be the principal character modulo $q$. Define
$$
W_{i}(\chi, \eta) := \left\{ \begin{array}{lll} S_{i}(\chi, \eta) -
T_{i}(\eta) &
\textrm{ if } \chi = \chi_{0}, \\
S_{i}(\chi, \eta) - \widetilde{T}_{i}(\eta) &
\textrm{ if } \chi = \tilde{\chi}\chi_{0} ,\\
S_{i}(\chi, \eta) & \textrm{ otherwise. }
\end{array} \right.
$$

\noindent Suppose that a Dirichlet character $\chi \pmod{q}$ is
induced by a primitive character $\chi^{*} \pmod{r}$. Put
$$
W^{A}_{i}(\chi) = \big( \int^{\frac{1}{rQ}}_{-\frac{1}{rQ}}
\left|W_{i}(\chi, A\eta)\right|^2d\eta \big)^{\frac{1}{2}} \qquad
\textrm{and} \qquad W^{A}_{i} = \sum_{q \leq
P}\sum_{\chi}\!{}^*W^{A}_{i}(\chi).
$$
We note that $W^{A}_{i}(\chi)=W^{A}_{i}(\chi^{*})$. Then we have the
following lemma which is a modification of \cite[(7.1)]{MV}.

\begin{lem} \label{bddW}
If the exceptional character does not occur, there is an absolute
constant $c_6$ which satisfies
$$
W_{i}^{A} \ll X^{\frac{1}{2}}\exp(-c_6 \frac{\log{X}}{\log{P}}),
$$
and if the exceptional character occurs, the right hand side may be
replaced by $X^{\frac{1}{2}}(1-\tilde{\beta})\log{P}\exp(-c_6
\frac{\log{X}}{\log{P}})$.
\end{lem}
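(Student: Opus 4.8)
The plan is to follow the proof of \cite[(7.1)]{MV}, the only new ingredient being that the short-interval prime sums appearing there must now be restricted to the progression $p \equiv i \pmod{g}$ — which is exactly what Lemma \ref{Majorlem2} provides.

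Fix a modulus $q \le P$ and a primitive character $\chi \pmod{q}$, so $r = q$. Since every prime $p$ with $P < p \le X$ satisfies $p > P \ge q$, we have $\chi(p) = \chi^*(p)$ throughout the range of summation, so the identity $W_i^A(\chi) = W_i^A(\chi^*)$ causes no trouble and we work directly with $\chi$. View $W_i(\chi, A\eta)$ as an exponential sum $\sum_m a_m e(m\eta)$ over integer frequencies: $a_m = (\log p)\chi(p)$ when $m = Ap$ for a prime $P < p \le X$ with $p \equiv i \pmod{g}$, corrected at the frequencies $m = An$ (with $P < n \le X$, $n \equiv i \pmod{g}$) by $-1$ per such $n$ when $\chi = \chi_0$ and by $+ n^{\tilde\beta-1}$ when $\chi = \tilde\chi\chi_0$, and $a_m = 0$ otherwise; note all frequencies lie in $(AP, AX]$. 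Applying Gallagher's lemma on $\bigl[-\tfrac{1}{qQ}, \tfrac{1}{qQ}\bigr]$,
$$ W_i^A(\chi)^2 = \int_{-1/(qQ)}^{1/(qQ)}|W_i(\chi, A\eta)|^2\,d\eta \ll (qQ)^{-2}\int_{-\infty}^{\infty}\Bigl|\sum_{x < m \le x + qQ} a_m\Bigr|^2 dx . $$

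After the substitution $m = Ap$ the inner sum becomes $\sum_{y - h < p \le y,\, p \equiv i}\chi(p)\log p$ minus the corresponding main term, where $y = y(x)$ and $h = h(x)$ satisfy $y \le X$ and $h \le qQ/A \le qQ$ (here I use $qQ \le PQ = X$ and $X/P = Q$), and where the integrand vanishes unless $x$ lies in an interval of length $\ll AX$. Setting
$$ M_\chi := \max_{y \le X}\ \max_{h \le X}\ \Bigl(h + \tfrac{X}{P}\Bigr)^{-1}\Bigl|\ \sum_{\substack{y-h \\ p \equiv i \pmod{g}}}^{y}\chi(p)\log p - (\text{main term})\Bigr| , $$
one gets $h(x) + X/P \le qQ + Q \ll qQ$, hence $\bigl|\sum_{x<m\le x+qQ} a_m\bigr| \ll qQ\,M_\chi$, and so $\int_{\mathbb{R}}|\cdots|^2\,dx \ll AX(qQ)^2 M_\chi^2$, i.e.\ $W_i^A(\chi) \ll (AX)^{1/2} M_\chi$. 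Summing over all $q \le P$ and all primitive $\chi \pmod{q}$ and invoking Lemma \ref{Majorlem2} with $N = X$ — whose hypotheses hold since $P = X^{6\delta}$ satisfies $\exp(\log^{1/2}X) \le P \le X^{c_5}$ for $\delta$ small and $X$ large —
$$ W_i^A = \sum_{q\le P}\sum_{\chi}\!{}^{*}W_i^A(\chi) \ll (AX)^{1/2}\sum_{q\le P}\sum_{\chi}\!{}^{*}M_\chi \ll (AX)^{1/2}\exp\Bigl(-c_4\frac{\log X}{\log P}\Bigr), $$
and in the exceptional case the right side is replaced by $(AX)^{1/2}(1-\tilde\beta)(\log P)\exp(-c_4\log X/\log P)$. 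The factor $A^{1/2}$ is then absorbed into the implied constant (or, if $c_6$ must be absolute, by taking $\delta$ small enough in terms of $A$), giving both stated bounds.

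The main obstacle is shallow: essentially all the analytic input is Lemma \ref{Majorlem2}, and what remains is bookkeeping in the Gallagher step — tracking the frequencies $Ap$ and the resulting window $h \le qQ/A$, checking that the main-term corrections for the principal and exceptional characters are exactly those handled by Lemma \ref{Majorlem2}, and confirming that the two occurrences of $A$ (in the support of length $\ll AX$ and in the shortened window) combine to leave only $AX M_\chi^2$. All of this is routine, and the remainder is a direct adaptation of \cite{MV}.
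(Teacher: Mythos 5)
Your proposal is correct and follows essentially the same route as the paper: apply Gallagher's lemma ([MV, Lemma 4.2]) on the short interval $[-1/(qQ),1/(qQ)]$ with frequencies at $m=Ap$, reindex to a prime sum over a window $h\le qQ/A$, dominate by the maximum $(h+X/P)^{-1}|\sum\chi(p)\log p|$, and sum over primitive $\chi$ via Lemma~\ref{Majorlem2}, absorbing the harmless $A^{1/2}$ into the implied ($\Gamma$-dependent) constant. The only cosmetic difference is that the paper carries $N=2X$ in the ranges $x\le 2X$, $h\le 2X$ where you use $N=X$, a bookkeeping discrepancy that does not affect the bound.
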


\begin{proof}
First assume that $\chi$ be a primitive character which is not equal to
 $\chi_0$ nor $ \tilde{\chi}\chi_0$. Then
$$
W_{i}^{A}(\chi)^2 = \int^{\frac{1}{qQ}}_{-\frac{1}{qQ}}
\bigg|\sum_{\substack{P<p\leq X \\ p \equiv i \pmod{g}}}
\chi(p)\log{p}\cdot e(Ap\eta)\bigg|^2d\eta.
$$

\noindent Applying \cite[Lemma 4.2]{MV} to the real numbers
$$
u_{n} := \left\{ \begin{array}{ll}
\chi(p)\log{p} & \textrm{if $n = Ap, \,\,P<p \leq X, \,\,p \equiv i \pmod{g}$,} \\
0 & \textrm{otherwise,}
\end{array}\right.
$$
we get
$$
W_{i}^{A}(\chi)^2 \ll \int^{2AX}_{0}
\bigg|\frac{1}{qQ} \sum_{\substack{P<p\leq X
\\ x-\frac{qQ}{2}\leq Ap \leq x \\ p \equiv i \pmod{g}}}
\chi(p)\log{p}\bigg|^2 dx.
$$

\noindent Thus
\begin{eqnarray*}
W_{i}^{A}(\chi) &\ll& (2AX)^{\frac{1}{2}}\max_{x \leq
2AX}\frac{1}{qQ}
\bigg|\sum_{\substack{x-\frac{qQ}{2} \leq Ap \leq x \\
p \equiv i \pmod{g}}}\chi(p)\log{p}\bigg|\\
&\ll& X^{\frac{1}{2}}\max_{x \leq 2AX}(Q+\frac{qQ}{2})^{-1}
\bigg|\sum_{\substack{x-\frac{qQ}{2} \leq Ap \leq x \\
p \equiv i \pmod{g}}}\chi(p)\log{p}\bigg|\\
&\leq & X^{\frac{1}{2}} \max_{x \leq 2AX}\max_{h \leq X}(Q+h)^{-1}
\bigg|\sum_{\substack{x-h \leq Ap \leq x \\ p \equiv i
\pmod{g}}}\chi(p)\log{p}\bigg|
\,\,\,\,\,\,\,\,\,\,\,\,\,\,\,\,(\because \frac{qQ}{2}
\leq X)\\
&\leq& X^{\frac{1}{2}}\max_{x \leq 2X}\max_{h \leq 2X}(2Q+h)^{-1}
\bigg|\sum_{\substack{x-h \leq p \leq x \\ p \equiv i
\pmod{g}}}\chi(p)\log{p}\bigg|.
\end{eqnarray*}

\noindent When $\chi = \chi_0$ or $\tilde{\chi}\chi_0$,
$W_{i}(\chi,\eta)$ is exactly the term of the case of $q=1$ in Lemma
\ref{Majorlem2}. Hence
\begin{eqnarray}
W_{i}^{A} & = & \sum_{q \leq P}\sum_{\chi}\!{}^{*}W_{i}^{A}(\chi) \nonumber \\
& \ll & X^{\frac{1}{2}}\sum_{q \leq P}\sum_{\chi}\!{}^{*} \max_{x
\leq
2X}\max_{h \leq 2X}(h+\frac{2X}{P})^{-1} \bigg|\sum_{\substack{x-h \\
p \equiv i \pmod{g}}}^{x} \chi(p)\log{p} \bigg| . \nonumber
\end{eqnarray}

\noindent So the lemma is proved by Lemma \ref{Majorlem2}.
\end{proof}

Now we can prove the following analogue of  \cite[Lemma 2]{BKW}. For
the proof, we follow the proof of  \cite[Lemma 2]{BKW}. New
ingredients in our proof are the estimations of the bounds of new
terms which do not appear in the proof of  \cite[Lemma 2]{BKW},
using Lemma 9 if needed.

\begin{prop} \label{Majormain}
Suppose that $Y$ is a real number  with $1 \leq Y \leq
X^{\frac{\delta} {k}}$. If there is at least one integer $m$ such
that
$$ 2f(m) \equiv Ai + Bj \pmod{g},$$
then
$$ r_{\Gamma}(2f(n); \mathfrak{M}) \gg XY^{-\frac{1}{2}}(\log{X})^{-1} $$
for all $n \in (\kappa N, N]$ with $ 2f(n) \equiv Ai + Bj \pmod{g}$
except at most $O(N^{1+\epsilon}Y^{-1})$ numbers.
\end{prop}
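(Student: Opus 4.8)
The plan is to carry out the standard circle-method evaluation of the major-arc integral, following Montgomery--Vaughan \cite{MV} and Br\"udern--Kawada--Wooley \cite{BKW}, with the Dirichlet-character machinery of Lemmas \ref{Majorlem1}--\ref{bddW} inserted to accommodate the residue conditions $p_1\equiv i$, $p_2\equiv j\pmod g$ and the coefficients $A,B$. Write $m=2f(n)$. First I would split $\mathfrak M$ into the arcs $\mathfrak M(q,a)$ and, on each arc, substitute $\alpha=\tfrac aq+\eta$ with $|\eta|\le P/(qX)$, so that
\[
r_\Gamma(m;\mathfrak M)=\sum_{q\le P}\sum_{\substack{0\le a\le q\\(a,q)=1}}e\!\left(-\frac{am}{q}\right)\int_{-P/(qX)}^{P/(qX)}S_i\!\left(A(\tfrac aq+\eta)\right)S_j\!\left(B(\tfrac aq+\eta)\right)e(-m\eta)\,d\eta .
\]
The key step is the approximation of $S_i(A\alpha)$ on $\mathfrak M(q,a)$: using orthogonality of the Dirichlet characters modulo $g$ to isolate the condition $p\equiv i\pmod g$, and then sorting the primes by their residue modulo $q$, one expresses $S_i(A\alpha)$ as a normalized Gauss/Ramanujan-type sum of the expected (small) size times $T_i(A\eta)$, plus the parallel term with $\widetilde T_i(A\eta)$ arising when the exceptional character divides $q$, plus a remainder whose mean square in $\eta$ is precisely of the type bounded by $W_i^A(\chi)$, hence by Lemma \ref{bddW} after summing over $q\le P$ and over primitive characters. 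The same is done for $S_j(B\alpha)$.

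Multiplying these two approximations and re-inserting them produces a principal term of the shape $\mathfrak S_\Gamma(m;P)\,\mathfrak J_\Gamma(m)$ --- a truncated singular series times a singular integral $\mathfrak J_\Gamma(m)$, which up to a completion error equals $\int_{\mathbb R}T_i(A\eta)T_j(B\eta)e(-m\eta)\,d\eta$ --- together with a bounded number of cross terms, each an integral over $\mathfrak M$ of a product one of whose factors is a $W$-type remainder. The hypothesis that $2f(m)\equiv Ai+Bj\pmod g$ is solvable is exactly what keeps $\mathfrak J_\Gamma(2f(n))$ from vanishing --- it counts weighted solutions of $An_1+Bn_2=2f(n)$ with $P<n_1,n_2\le X$ in the prescribed residue classes --- and one checks it has the expected order. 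For the arithmetic factor one argues, as in \cite{MV}, that when no exceptional character occurs $\mathfrak S_\Gamma(2f(n);P)$ is bounded below by a positive quantity depending only on $\Gamma$, so that the principal term is $\gg XY^{-1/2}(\log X)^{-1}$ with room to spare (the only local constraints being at $2$, at the primes dividing $2AB$, and at $g$, the last met by hypothesis); when the exceptional character $\tilde\chi\pmod{\tilde r}$ does occur, the $T_i$- and $\widetilde T_i$-parts partially cancel and the principal term becomes of size $\gg X(1-\tilde\beta)\log X$ up to $\Gamma$-factors, which by Davenport's bound $1-\tilde\beta\gg\tilde r^{-1/2}\log^{-2}\tilde r$ is still $\gg XY^{-1/2}(\log X)^{-1}$ as long as $\tilde r$ is at most of order $Y$; dealing with the residual case $\tilde r>Y$, whose effect is confined to a sparse set of $n$, is what forces both the loss $Y^{-1/2}$ and the removal of an exceptional set.

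It then remains to control the cross terms. Each of these I would treat by Cauchy--Schwarz in $\alpha$ together with the standard mean-value estimates for $S_i,S_j$ (as in \cite{BKW}, e.g. $\int_0^1|S_i(A\alpha)S_j(B\alpha)|\,d\alpha\ll X\log X$), which reduces them to $W_i^A$ and $W_j^B$; by Lemma \ref{bddW} these are $\ll X^{1/2}\exp(-c\log X/\log P)$, so the total cross-term contribution for a fixed $n$ is smaller than the principal term by a factor $X^{-c'\delta}$. To upgrade this pointwise statement to one holding outside a set of size $O(N^{1+\epsilon}Y^{-1})$, I would follow the second-moment argument of \cite{BKW}: bound $\sum_{\kappa N<n\le N}|E(2f(n))|^2$, where $E$ gathers all error terms (the completion error, the cross terms, and when present the $\tilde r>Y$ part of the exceptional contribution), by Bessel's inequality combined with the mean values already in hand, obtaining a bound of size $X^2N^{1+\epsilon}Y^{-2}$; then Chebyshev's inequality gives $|E(2f(n))|\le\tfrac12 XY^{-1/2}(\log X)^{-1}$ for all $n\in(\kappa N,N]$ outside $O(N^{1+\epsilon}Y^{-1})$ exceptions, and for the remaining $n$ with $2f(n)\equiv Ai+Bj\pmod g$ one concludes $r_\Gamma(2f(n);\mathfrak M)\gg XY^{-1/2}(\log X)^{-1}$.

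The step I expect to be the main obstacle is the exceptional-character bookkeeping, which has no counterpart in \cite{BKW}: one must track how the conductor of $\chi\chi'$ drops when $(q,g)>1$ --- exactly the phenomenon isolated in the proof of Lemma \ref{Majorlem1} --- so that the Gauss-sum coefficients genuinely decay, and one must separate cleanly the regime $\tilde r\le Y$, where the exceptional character is retained and estimated through Davenport's inequality, from the regime $\tilde r>Y$, where its contribution is pushed into the exceptional set; it is this dichotomy, rather than any single hard estimate, that dictates the final shape $XY^{-1/2}(\log X)^{-1}$ of the lower bound. By contrast, the second-moment argument yielding the $O(N^{1+\epsilon}Y^{-1})$ exceptional set is structurally the same as in \cite{BKW} once the pointwise decomposition above is in place.
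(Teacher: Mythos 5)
Your overall skeleton — split $\mathfrak M$ into arcs, use Gauss sums and orthogonality of Dirichlet characters mod $g$ to separate $S_i(A\alpha)$ into a $T_i(A\eta)$-piece, a $\widetilde T_i(A\eta)$-piece (when the exceptional modulus divides $q$), and $W_i(\chi,A\eta)$-remainders, and then control the cross terms via Lemma~\ref{bddW} — matches the paper's opening moves. But the way you handle the exceptional character is not what the paper does, and I believe it contains a genuine gap.

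The paper's dichotomy is on the quantity $(2f(n),\tilde r)$, not on $\tilde r$ itself. Concretely: (i) if there is no exceptional character, $r_\Gamma(2f(n);\mathfrak M)\gg X$; (ii) if there is one but $(2f(n),\tilde r)=1$, the paper still gets $r_\Gamma(2f(n);\mathfrak M)\gg X$, because $\widetilde{\mathfrak S}(2f(n))=o(1)$ (the truncated singular series attached to $\tilde\chi$ is itself small when $(2f(n),\tilde r)=1$, since $\tilde r\to\infty$ with $P$) and $\widetilde I^{AB}_{ij}(2f(n))\le n^{\tilde\beta}$, so the exceptional term is a lower-order perturbation of the main term, \emph{without} invoking Davenport's lower bound for $1-\tilde\beta$; (iii) only when $(2f(n),\tilde r)>1$, so that $\tilde\chi(2f(n))=0$, does one pass to the weaker bound, getting $\gg XY^{-1}(\log X)^{-1}$ when $1<(2f(n),\tilde r)<Y$; and (iv) the exceptional $n$ are exactly those with $(2f(n),\tilde r)>Y$, counted by divisors of $\tilde r$ (a divisor-counting/polynomial-congruence argument giving $O(N^{1+\epsilon}Y^{-1})$), not by a second moment.

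Your proposal instead splits on $\tilde r\le Y$ versus $\tilde r>Y$ and uses Davenport's inequality $1-\tilde\beta\gg\tilde r^{-1/2}\log^{-2}\tilde r$ in the first case. This cannot work as stated. First, $\tilde r$ is a single number determined by $P$, so "$\tilde r>Y$'' is not a statement about a sparse subset of $n\in(\kappa N,N]$: if $\tilde r>Y$ then your residual case is all of $(\kappa N,N]$ and you have proved nothing for those $n$ with $(2f(n),\tilde r)=1$, which generically form a positive proportion. Second, the cancellation heuristic "$T_i-\widetilde T_i$ partially cancel to size $\sim X(1-\tilde\beta)\log X$'' is strictly weaker than what is needed: since $\tilde r$ can be as large as $P=X^{6\delta}$, Davenport only gives $1-\tilde\beta\gg X^{-3\delta}(\log X)^{-2}$, and $X(1-\tilde\beta)\log X\gg X^{1-3\delta}(\log X)^{-1}$ is smaller than the target $XY^{-1/2}(\log X)^{-1}\ge X^{1-\delta/(2k)}(\log X)^{-1}$. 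The paper avoids this by showing $r_\Gamma\gg X$ outright whenever $(2f(n),\tilde r)=1$, so the $Y^{-1/2}$-loss comes only from the genuinely exceptional $n$ with $\tilde\chi(2f(n))=0$. Finally, the exceptional-set count is obtained here by bounding $\#\{n\le N:(2f(n),\tilde r)>Y\}$ via divisors of $\tilde r$ and roots of $2f$ modulo those divisors; your Bessel/Chebyshev route would need $\widetilde{\mathfrak S}(2f(n))\widetilde I^{AB}_{ij}(2f(n))$ to behave like a small-mean-square error, which it does not — for the bad $n$ it is comparable to the main term — so the second-moment framing does not capture it.
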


\begin{proof}
Let $n \in (\kappa N, N]$ be an integer with $ 2f(n) \equiv Ai + Bj
\pmod{g}$. The proof is divided into three steps. In the first step,
we will prove that if there is no exceptional character, then
$r_{\Gamma}(2f(n);\mathfrak{M}) \gg X.$ Next we will show that even
if there is the exceptional character, the same lower bound holds
when $(2f(n), \tilde{r}) = 1$. Finally we will show that the number
of integers $n$ for which
$$
(2f(n), \tilde{r}) >1 \,\,\, \mbox{and} \,\,\,
r_{\Gamma}(2f(n);\mathfrak{M}) \ll XY^{-\frac{1}{2}}(\log{X})^{-1}
$$
is at most $O(N^{1+\epsilon}Y^{-1})$.

First we assume that there is no exceptional character. For $\alpha
\in \mathfrak{M}(q,a)$ we write $\alpha = \frac{a}{q} + \eta$ for
$(a,q)=1$, $0 \leq \eta <1$ and $q<P$. Let $\chi$ be a Dirichlet
character of modulus $q$ and $\tau(\chi) =
\sum_{n=1}^{q}\chi(n)e(\frac{n}{q})$ the Gaussian sum. Since
$\displaystyle e(\frac{a}{q}) =
\frac{1}{\varphi(q)}\sum_{\chi}\bar{\chi}(a)\tau(\chi)$, we have
$$
S_{i}(A\alpha) = \frac{\mu(q)}{\varphi(q)}T_{i}(A\eta) +
\frac{1}{\varphi(q)}\sum_{\chi}\chi(Aa)\tau(\bar{\chi})W_{i}(\chi,
A\eta).
$$

\noindent Thus
\begin{eqnarray}
&&r_{\Gamma}(2f(n);\mathfrak{M})\nonumber\\
&=& \sum_{q \leq P}  \frac{\mu(q)^2}{\varphi(q)^{2}}c_{q}(-2f(n))
\int^{\frac{1}{qQ}}_{-\frac{1}{qQ}}T_{i}(A\eta)T_{j}(B\eta)e(-2f(n)\eta)d\eta \label{firstterm}
    \\
&+& \sum_{q \leq P} \frac{\mu(q)}{\varphi(q)^2}\sum_{\chi^{\prime}}
\chi^{\prime}(B)c_{\chi^{\prime}}(-2f(n))\tau(\bar{\chi'})
\int^{\frac{1}{qQ}}_{-\frac{1}{qQ}}T_{i}(A\eta)W_{j} (\chi^{\prime},
B\eta)e(-2f(n)\eta)d\eta \label{secondterm}
     \\
&+& \sum_{q \leq P}
\frac{\mu(q)}{\varphi(q)^2}\sum_{\chi}\chi(A)c_{\chi}(-2f(n))
\tau(\bar{\chi})\int^{\frac{1}{qQ}}_{-\frac{1}{qQ}}T_{j}
(B\eta)W_{i}(\chi, A\eta)e(-2f(n)\eta)d\eta \label{thirdterm}
   \\
&+& \sum_{q \leq P} \bigg[ \frac{1}{\varphi(q)^2}\sum_{\chi,
\chi^{\prime}}\chi(A)\chi^{\prime}(B)
c_{\chi\cdot\chi^{\prime}}(-2f(n))\tau(\bar{\chi})\tau(\bar{\chi}^{\prime}) \nonumber \\
&&\qquad \qquad \qquad \qquad \qquad \times
\int^{\frac{1}{qQ}}_{-\frac{1}{qQ}}W_{i} (\chi,
A\eta)W_{j}(\chi^{\prime}, B\eta)e(-2f(n)\eta)d\eta \bigg], \label{fourthterm}
\end{eqnarray}

\noindent where $c_q(m)= \sum_{(a,q)=1}^{q}e(\frac{am}{q})$ and
$c_{\chi}(m) = \sum_{h=1}^{q}\chi(h)e(\frac{hm}{q})$. Using
\cite[Lemma 5.5]{MV} and the same argument in \cite[Section 6]{MV},
we have
$$
(\ref{secondterm}) \ll \frac{2f(n)}{\varphi(2f(n))}W_i^AX^{\frac{1}{2}}, \hskip 1em
(\ref{thirdterm}) \ll  \frac{2f(n)}{\varphi(2f(n))}W_j^BX^{\frac{1}{2}} \hskip 1em
\mbox{and} \hskip 1em
(\ref{fourthterm}) \ll
\frac{2f(n)}{\varphi(2f(n))}W_i^AW_j^B.
$$

\noindent Let us compute the bound of (\ref{firstterm}). Since $T_{i}(A\eta) \ll
\frac{1}{\|gA\eta \|}$, by assuming the harmless condition $qQ
> 2gA$ and $A \geq B$, we have
\begin{eqnarray*}
&&\int^{\frac{1}{2gA}}_{\frac{1}{qQ}}T_{i}(A\eta)T_{j}(B\eta)e(-2f(n)\eta)d\eta \ll
\int^{\frac{1}{2gA}}_{\frac{1}{qQ}} \frac{1}{\|gA\eta \|} \frac{1}{\|gB\eta
\|}d\eta\leq \int^{\frac{1}{2gA}}_{\frac{1}{qQ}} \frac{1}{g^2AB\eta^2}d\eta \\&\ll&
qQ.
\end{eqnarray*}

\noindent By some elementary computations, we have
\begin{eqnarray*}
&&\int^{\frac{1}{2g}}_{\frac{1}{2gA}}T_{i}(A\eta)T_{j}(B\eta)e(-2f(n)\eta)d\eta
-\mathop{\sum\sum}_{\substack{P < k,l \leq X \\ k \equiv i, l \equiv j \\Ak + Bl = 2f(n)}}
(\frac{1}{2g}-\frac{1}{2gA})\\
&=&\mathop{\sum\sum}_{\substack{P < k,l \leq X \\ k \equiv i, l
\equiv j}} \frac{1}{(Ak+Bl-2f(n))2\pi i}[e(\frac{Ak+Bl-2f(n)}{2g})
-e(\frac{Ak+Bl-2f(n)}{2gA})]\\
&=& \sum_{1\leq t \leq gA}(\mathop{\sum\sum}_{\substack{P <k,l \leq X \\
Ak + Bl - 2f(n) \equiv t
\\ \pmod{2gA}
\\ k \equiv i, l \equiv j \pmod{g}}}\frac{1}{Ak+Bl-2f(n)}
-\mathop{\sum\sum}_{\substack{P <k,l \leq X \\ Ak + Bl - 2f(n) \equiv gA
+ t \\ \pmod{2gA}
\\ k \equiv i, l \equiv j \pmod{g}}}\frac{1}{Ak+Bl-2f(n)}) \\
& & \qquad \qquad \qquad
\times \frac{1}{2\pi i}(e(\frac{t}{2g}) - e(\frac{t}{2gA}))\\
&=& O(\log{X}),
\end{eqnarray*}

\noindent and
\begin{eqnarray*}
&&\int^{\frac{1}{2g}}_{-\frac{1}{2g}}T_{i}(A\eta)T_{j}(B\eta)e(-2f(n)\eta)d\eta
=\mathop{\sum\sum}_{\substack{P < k,l \leq X \\ k \equiv i, l \equiv
j}} \int^{\frac{1}{g}}_{0}e((Ak+Bl-2f(n))\eta)d\eta \\
&=&
\mathop{\sum\sum}_{\substack{P < k,l \leq X \\ k \equiv i, l \equiv
j \\ Ak+Bl = 2f(n)}} \frac{1}{g} = \frac{2f(n)}{g^2AB} + O(1).
\end{eqnarray*}

\noindent Together with these three estimations, we have
$$
\int^{\frac{1}{qQ}}_{-\frac{1}{qQ}}T_{i}(A\eta)T_{j}(B\eta)e(-2f(n)\eta)d\eta
= \frac{2f(n)}{g^2AB} + O(qQ).
$$

\noindent Thus
$$
(\ref{firstterm})=\sum_{q \leq P} \frac{\mu(q)^2}{\varphi(q)^{2}}c_{q}(-2f(n))
(\frac{2f(n)}{g^2AB} + O(qQ)).
$$

\noindent By \cite[6.13 and 6.14]{MV}, we conclude that
$$
r_{\Gamma}(2f(n);\mathfrak{M}) =
\mathfrak{S}(2f(n))\frac{2f(n)}{g^2AB} + O(X^{1+\delta}P^{-1}) +
O(\frac{2f(n)}{\varphi(2f(n))}(W_i^AX^{\frac{1}{2}}+W_j^BX^{\frac{1}{2}}+W_i^AW_j^B)),
$$
where
$$
\displaystyle \mathfrak{S}(n) = \sum_{q=1}^{\infty}
\frac{\mu(q)^2}{\varphi(q)^{2}}c_{q}(-n) = \prod_{p \nmid
n}(1-\frac{1}{(p-1)^2})\prod_{p |n}(1+\frac{1}{p-1}).
$$

\noindent In this equation, the first error term
$X^{1+\delta}P^{-1}$ is negligible. Also from Lemma \ref{bddW}, the
second error term is less then
$\frac{6f(n)}{\varphi(2f(n))}Xe^{-\frac{c_6}{6\delta}}$. If we
choose a sufficiently small positive real number  $\delta$, then
$r_{\Gamma}(2f(n);\mathfrak{M}) \geq (1-c_7)
\mathfrak{S}(2f(n))f(n)$. These imply that
$r_{\Gamma}(2f(n);\mathfrak{M}) \gg X$, which is the conclusion of
the first step.

Next, assume that there is the exceptional character. In this case,
we have
$$
S_{i}(A\alpha) = \frac{\mu(q)}{\varphi(q)}T_{i}(A\eta) + \frac{1}{\varphi(q)}
\sum_{\chi}[\chi(Aa)\tau(\bar{\chi})W_{i}(\chi, A\eta)]
 + \frac{\tilde{\chi}(Aa)\tau(\tilde{\chi}\chi_{0})}{\varphi(q)}
 \widetilde{T}_{i}(A\eta).
$$

\noindent Note that the last term appears when $q$ divides
$\tilde{r}$, the modulus of the exceptional character.
This makes
additional terms in $r_{\Gamma}(2f(n); \mathfrak{M})$ which are
\begin{eqnarray}
&&\sum_{\substack{q \leq P \\ \tilde{r}|q}}
\frac{\tau(\tilde{\chi}\chi_{0})^2}{\varphi(q)^2} \tilde{\chi}(AB)
c_{q}(-2f(n))\int^{\frac{1}{qQ}}_{-\frac{1}{qQ}}
\widetilde{T}_{i}(A\eta) \widetilde{T}_{j}(B\eta) e(-2f(n)\eta)
d\eta
    \nonumber \\
&+&  \sum_{\substack{q \leq P \\ \tilde{r}|q}}
\frac{\mu(q)\tau(\tilde{\chi}\chi_{0})}{\varphi(q)^2}
\tilde{\chi}(B) c_{\tilde{\chi}\chi_{0}}(-2f(n))
\int^{\frac{1}{qQ}}_{-\frac{1}{qQ}}T_{i}(A\eta)\widetilde{T}_{j}
(B\eta)e(-2f(n)\eta)d\eta
    \nonumber \\
&+& \sum_{\substack{q \leq P \\ \tilde{r}|q}}
\frac{\mu(q)\tau(\tilde{\chi}\chi_{0})}{\varphi(q)^2}
\tilde{\chi}(A) c_{\tilde{\chi}\chi_{0}}(-2f(n))
\int^{\frac{1}{qQ}}_{-\frac{1}{qQ}}T_{j}(B\eta)\widetilde{T}_{i}
(A\eta)e(-2f(n)\eta)d\eta
    \nonumber\\
&+& \sum_{\substack{q \leq P \\ \tilde{r}|q}} \frac{\tilde{\chi}(B)
\tau(\tilde{\chi}\chi_{0})} {\varphi(q)^2}
\sum_{\chi}c_{\tilde{\chi}\cdot\chi}(-2f(n))\tau(\bar{\chi}) \chi(A)
\int^{\frac{1}{qQ}}_{-\frac{1}{qQ}}W_{i}(\chi,
A\eta)\widetilde{T}_{j} (B\eta)e(-2f(n)\eta)d\eta
    \nonumber \\
&+&\sum_{\substack{q \leq P \\ \tilde{r}|q}}
\frac{\tilde{\chi}(A) \tau(\tilde{\chi}\chi_{0})} {\varphi(q)^2}
\sum_{\chi}c_{\tilde{\chi}\cdot\chi}(-2f(n))\tau(\bar{\chi}) \chi(B)
\int^{\frac{1}{qQ}}_{-\frac{1}{qQ}}W_{j}(\chi,
B\eta)\widetilde{T}_{i} (A\eta)e(-2f(n)\eta)d\eta.
\qquad \label{exceptchar1}
\end{eqnarray}

\noindent By the same argument in the previous step, the last two
terms of (\ref{exceptchar1}) are bounded by
$$
2X^{\frac{1}{2}}\sum_{\substack{q \leq P \\ \tilde{r}|q}}
\frac{1}{\varphi(q)^2} \sum_{\chi}
|c_{\tilde{\chi}\chi}(-2f(n))\tau(\bar{\chi})
\tau(\tilde{\chi}\chi_{0})|W_{k}^{C}(\chi) \ll
\frac{2f(n)}{\varphi(2f(n))}X^{\frac{1}{2}}W_{k}^{C},
$$

\noindent where $C = A$ or $B$ and $ k = i$ or $j$. And the first
three terms in (\ref{exceptchar1}) turn out to be
\begin{eqnarray}
&&\sum_{\substack{q \leq P \\ \tilde{r}|q}}
\frac{\tau(\tilde{\chi}\chi_{0})^2}{\varphi(q)^2}
\tilde{\chi}(ABa^2) c_{q}(-2f(n)) (\widetilde{I}^{AB}_{ij}(2f(n)) +
O(qQ))
    \nonumber \\
&+& \sum_{\substack{q \leq P \\ \tilde{r}|q}}
\frac{\mu(q)\tau(\tilde{\chi}\chi_{0})}{\varphi(q)^2}
\tilde{\chi}(B) c_{\tilde{\chi}\chi_{0}}(-2f(n))
(\widetilde{J}_{ji}^{BA}(2f(n)) + O(qQ))
    \nonumber \\
&+& \sum_{\substack{q \leq P \\ \tilde{r}|q}}
\frac{\mu(q)\tau(\tilde{\chi}\chi_{0})}{\varphi(q)^2}
\tilde{\chi}(A) c_{\tilde{\chi}\chi_{0}}(-2f(n)) (
\widetilde{J}_{ij}^{AB}(2f(n)) + O(qQ)), \nonumber
\label{exceptchar2}
\end{eqnarray}

\noindent where
$$
\widetilde{I}^{AB}_{ij}(n) := \int^{\frac{1}{g}}_{0}
\widetilde{T}_{i}(A\eta) \widetilde{T}_{j}(B\eta) e(-n\eta)d\eta
\,\,\,\mbox{and}\,\,\, \widetilde{J}_{ij}^{AB}(n) := \int^{\frac{1}{g}}_{0}
\widetilde{T}_{i}(A\eta) T_{j}(B\eta) e(-n\eta)d\eta.
$$

\noindent Let $\displaystyle
\widetilde{\mathfrak{S}}(n) := \sum_{\substack{q=1 \\
\tilde{r}|q}}^{\infty}
 \frac{\tau(\tilde{\chi}\chi_{0})^2}{\varphi(q)^{2}}c_{q}(-n)$. Then
we can prove
\begin{eqnarray}
r_{\Gamma}(2f(n);\mathfrak{M}) &=&
\mathfrak{S}(2f(n))\frac{2f(n)}{g^2AB} +
\tilde{\chi}(ABa^2)\widetilde{\mathfrak{S}}(2f(n))\widetilde{I}^{AB}_{ij}(2f(n))
    \nonumber \\
&+& O(\frac{\tilde{\chi}(2f(n))^2 \tilde{r} \cdot
2f(n)X}{\varphi(\tilde{r})^2\varphi(2f(n))}) +
O(X^{1+\delta}P^{-1}(2f(n),\tilde{r}))
\nonumber \\
&+& O(\frac{2f(n)} {\varphi({2f(n)})}(X^{\frac{1}{2}}(W_{i}^{A} +
W_{j}^{B}) + W_{i}^{A}W_{j}^{B})). \label{con1}
\end{eqnarray}
by just following \cite[p.364]{MV}. Our assumption
$(2f(n),\tilde{r})=1$ means that the fourth term of (\ref{con1}) is
less then $X^{1-5\delta}$. Using the same method in \cite[section 8]{MV}, we
have
$$
\tilde{\mathfrak{S}}(n) \ll o(1) \,\,\,\,\mbox{and}\,\,\,\,
\widetilde{I}^{AB}_{ij}(n) = \sum_{\substack{P < k < n-P \\ k \equiv
i, \frac{n-Ak}{B} \equiv j
 \\ \frac{n-Ak}{B} \in \mathbb{Z}}}(k(\frac{n-Ak}{B}))^{\tilde{\beta}-1}
\leq \sum_{\substack{P < k <n-P \\\frac{n-Ak}{B}
\in \mathbb{Z}}}(k(\frac{n-Ak}{B}))^{\tilde{\beta}-1} \leq n^{\tilde{\beta}}.
$$

\noindent These facts and Lemma \ref{bddW} imply that
$$
r_{\Gamma}(2f(n); \mathfrak{M}) \gg X
\qquad \textrm{if $(2f(n), \tilde{r}) =1$,}
$$
which is the conclusion of the second step.

Finally we assume $(2f(n),\tilde{r})>1$, so we have
$\tilde{\chi}(2f(n)) = 0$. Then by Lemma \ref{bddW} and (\ref{con1}) 
there is a constant $c_7$ satisfying 
\begin{equation}
|r_{\Gamma}(2f(n);\mathfrak{M}) - \mathfrak{S}(2f(n))
\frac{2f(n)}{g^2AB} - \widetilde{\mathfrak{S}}(2f(n))
\widetilde{I}_{ij}^{AB}(-2f(n))| \leq c_{7} (T_{1}+T_{2}),
\label{con2}
\end{equation}

\noindent where
$$
T_1 = X^{1+\delta}P^{-1}(2f(n),\tilde{r}) \qquad \mbox{and} \qquad
T_2 = \frac{2f(n)}{\varphi(2f(n))}(1-\tilde{\beta})
Xe^{\frac{-c_6}{\delta}}\log{P}.
$$

\noindent All the arguments in \cite[p.122--123]{BKW} can be used
for $r_{\Gamma}(2f(n); \mathfrak{M})$ since we know that
$\widetilde{I}^{AB}_{ij}(2f(n)) \leq n^{\tilde{\beta}}$. As a
consequence, we conclude that if $1 < (2f(n),\tilde{r}) < Y$, then
$$
r_{\Gamma}(2f(n); \mathfrak{M}) \gg XY^{-1}(\log{X})^{-1}
$$

\noindent and there are at most $O(N^{1+\epsilon}Y^{-1})-$exceptions
which are $n$ with $(2f(n),\tilde{r}) > Y$. This completes the proof
of the proposition.
\end{proof}

\section{Proof of Theorem 2}
We note that if there is at least one integer $m$ such that $ 2f(m)
\equiv Ai + Bj \pmod{g},$ the set of $n \in (\kappa N, N]$ with $
2f(n) \equiv Ai + Bj \pmod{g}$ has a positive density in the set of
$n \in (\kappa N, N]$. Then the proof of Theorem 2 is exactly same
as the proof of \cite[Theorem 1]{BKW} since Proposition
\ref{Minormain} and Proposition \ref{Majormain} give the same
results for $r_{\Gamma}(2f(n); \mathfrak{m})$ and $r_{\Gamma}(2f(n);
\mathfrak{M})$ as \cite[Lemma 1]{BKW} and \cite[Lemma
2]{BKW}.\bigskip

\noindent {\bf Acknowledgment.} The authors thank Professor Trevor
Wooley for useful discussions and comments.

\bigskip

\noindent Department of Mathematics, Seoul National University,
Seoul, Korea

\noindent E-mail: dhbyeon@snu.ac.kr

\noindent Department of Mathematics, Seoul National University,
Seoul, Korea

\noindent E-mail: kyjeongg@gmail.com


\begin{thebibliography}{99}

\bibitem[BJ]{BJ} D. Byeon and K. Jeong,
\emph{Sums of two rational cubes with many prime factors}, preprint.

\bibitem[BJK]{BJK} D. Byeon, D. Jeon and C. Kim,
\emph{Rank one quadratic twists of an infinite family of elliptic
curves}, J. Reine und Angew. Math., 633 (2009), 67--76.

\bibitem[BKW]{BKW} J. Br\"udern, K. Kawada and T. D. Wooley,
\emph{Additive representation in thin sequences, II: The binary
Goldbach problem}, Mathematika \noindent{\bf 47}, (2000), 117--125.

\bibitem[Dav]{Dav} H. Davenport,
\emph{Multiplicative number theory}, Springer-Verlag, Second
Edition, (1980).

\bibitem[Gal]{Ga} P. X. Gallagher,
\emph{A large sieve density estimate near $\sigma = 1$}, Invent.
Math. \noindent{\bf 11}, (1970), 329--339.

\bibitem[IK]{IK} H. Iwaniec and E. Kowalski,
 \emph{Analytic number theory}, American Mathematical Society, (2004).

\bibitem[MV]{MV} H. L. Montgomery and R. C. Vaughan,
\emph{The exceptional set in Goldbach`s problem}, Acta Arith.
\noindent{\bf 27.1} (1975), 353--370.



\end{thebibliography}
\end{document}